 \newcommand{\ok}{\overline k}
\newcommand{\ot}{\otimes}
\newcommand{\op}{\mathrm{op}}
\newcommand{\Z}{\mathbb{Z}} 
\newcommand{\Q}{\mathbb{Q}}
\newcommand{\C}{\mathbb{C}}
\newcommand{\CP}{\mathbb{C}P}
\renewcommand{\L}{\mathbb L}
\newcommand{\Cc}{\mathcal C^*}
\renewcommand{\a}{\mathfrak a}
\newcommand{\G}{\mathscr G}
\newcommand{\Aaut}{\mathscr Aut}
\DeclareMathOperator{\hd}{rhd}
\DeclareMathOperator{\id}{id} 
 \DeclareMathOperator{\Aut}{Aut}
\DeclareMathOperator{\map}{map}
\DeclareMathOperator{\aut}{hAut}
\DeclareMathOperator{\haut}{hAut}
\DeclareMathOperator{\Der}{Der}
\newtheorem{thm*}{Theorem}
\newtheorem{thm}{Theorem}[section]
\newtheorem{prop}[thm]{Proposition}
\newtheorem{lemma}[thm]{Lemma}
\theoremstyle{definition}
\newtheorem{dfn}[thm]{Definition}
\newtheorem{exmp}[thm]{Example}
\newtheorem{rmk}[thm]{Remark}
\newtheorem{conv}[thm]{Convention}
\newcommand{\Addresses}{{
  \bigskip
  \footnotesize
  
\par\nopagebreak
  \textit{E-mail address:} \texttt{basharsaleh1@gmail.com }
}}
\begin{document}
\title[Algebraic groups of homotopy classes of automorphisms]{Algebraic groups of homotopy classes of automorphisms and operadic Koszul duality} 
\author{Bashar Saleh} 
\date{}
\maketitle
\begin{abstract} Given a simply connected finite CW-complex $X$, there are several, a priori different, algebraic groups whose groups of $\Q$-points are isomorphic to the group of homotopy classes of homotopy automorphisms of the rationalization of $X$.
We will show that two of these different algebraic groups are isomorphic using the theory of operadic Koszul duality. As a by-product of the techniques we use, we deduce some isomorphisms of sets of homotopy classes of maps that might be viewed as Eckmann-Hilton dual to some well-known isomorphisms.
\end{abstract}

\section{Introduction}
Given a minimal finite type Sullivan algebra $(\Lambda V,d)$ (abbreviated $\Lambda V$ in this introduction) over $\Q$ with finite dimensional homology (i.e. $\sum_i\dim(H^i(\Lambda V))<\infty$), there are several ways of regarding the group of homotopy classes of automorphisms of $\Lambda V$, denoted ahead by $\Aut^h(\Lambda V)$, as the group of $\Q$-points of some linear algebraic group $\G$. What these  algebraic groups have in common is the following:
Even though $\Lambda V$ is not finitely generated, $\Aut^h(\Lambda V)$ might be identified with a quotient 
$$
\Aut(A)/K
$$
where $A$ is some algebraic structure that is finitely generated, and therefore gives rise to an algebraic group $\Aaut(A)$, and where $K$ happens to be the group of $\Q$-points of a normal unipotent algebraic subgroup $\mathscr K(A)\subset\Aaut(A)$. It follows that we might identify $\Aut^h(\Lambda V)$ with the group of $\Q$-points of the algebraic group $\Aaut(A)/\mathscr K(A)$. We will describe two of these constructions:
\begin{itemize}
    \item[i.] Using a certain topological argument, one can deduce that $\Aut^h(\Lambda V) \cong \Aut^h(\Lambda V^{\leq n})$ where $n$ is any natural number for which $H^{\geq n+1}(\Lambda V) =0$. Now we can regard $\Aut^h(\Lambda V^{\leq n})$ as the group of $\Q$-points of an algebraic group $$\Aaut^h(\Lambda V^{\leq n})=\Aaut(\Lambda V^{\leq n})/\mathscr K(\Lambda V^{\leq n})$$
    (this is originally due to Sullivan \cite{sullivanRHT}).
    \item[ii.] If $H^1(\Lambda V)=0$,  and $(\L W,d)$ (abbreviated $\L W$ in this introduction)  is the minimal dg Lie algebra model for the geometric realization of $\Lambda V$, it follows that $ \Aut^h(\Lambda V)=\Aut^h(\L W)$ (by some rational homotopy theory). Moreover, if $\Lambda V$ has finite dimensional homology,  it follows that $\L W$ is finitely generated (since $W=s^{-1}H^*(\Lambda V)^\vee$) and thus we might regard $\Aut^h(\L W)$ as the group of $\Q$-points of an algebraic group 
    $$\Aaut^h(\L W)=\Aaut(\L W)/\mathscr K(\L W).$$
\end{itemize}
The objectives of this paper are the following:
\subsection{Isomorphisms of algebraic groups:} In \cite{espic-saleh} the following was announced but without a full proof:
\begin{thm*}\label{main-thm-rht}
Let $X$ be a finite CW-complex with a minimal Sullivan model $\Lambda V$ and a minimal chain Lie algebra model $\L W$.
For  a sufficiently large $n$, there is an isomorphism of algebraic groups,
$$\Aaut^h(\L W)\cong \Aaut^h(\Lambda V^{\leq n}).$$
\end{thm*}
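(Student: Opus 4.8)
\medskip
\noindent\textbf{Proof strategy.}
The plan is to realise both sides as the algebraic group corepresenting one and the same functor on finitely generated commutative $\Q$-algebras, and then to identify the two descriptions of that functor by means of operadic Koszul duality. For a finitely generated model $A$ --- a truncated minimal Sullivan algebra, or a minimal chain Lie algebra on a finite-dimensional generating space --- the quotient $\Aaut^h(A)=\Aaut(A)/\mathscr K(A)$ corepresents the functor
\[
R\ \longmapsto\ \pi_0\bigl(\haut(A\ot R)\bigr)
\]
on finitely generated commutative $\Q$-algebras $R$, where $\haut(-)$ is the simplicial monoid of self weak equivalences; this is Sullivan's identification \cite{sullivanRHT} carried out over $R$ rather than over $\Q$, using that $\mathscr K(A)$ is unipotent and equals the subgroup of self equivalences homotopic to the identity. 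So it is enough to produce an isomorphism, natural in $R$, between the functors $R\mapsto\pi_0\haut(\Lambda V^{\leq n}\ot R)$ and $R\mapsto\pi_0\haut(\L W\ot R)$.

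\smallskip
\noindent\emph{Removing the truncation.} Fix $n$ with $H^{\geq n+1}(\Lambda V)=0$. Since $-\ot R$ is exact we have $H^*(\Lambda V\ot R)=H^*(\Lambda V)\ot R$, so $H^{\geq n+1}(\Lambda V\ot R)=0$ as well. The obstruction-theoretic argument recalled in item (i) of the introduction --- which identifies $\Aut^h(\Lambda V)$ with $\Aut^h(\Lambda V^{\leq n})$ because all relevant obstruction and indeterminacy groups lie in $H^{\geq n+1}(\Lambda V)=0$ --- goes through verbatim over $R$, since those groups become $H^{\geq n+1}(\Lambda V)\ot R=0$. This yields a natural isomorphism $\pi_0\haut(\Lambda V^{\leq n}\ot R)\cong\pi_0\haut(\Lambda V\ot R)$, reducing us to comparing $\pi_0\haut(\Lambda V\ot R)$ with $\pi_0\haut(\L W\ot R)$.

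\smallskip
\noindent\emph{Koszul duality.} Here the operadic input is used. The operads $\Com$ and $\Lie$ are Koszul dual, and the associated bar--cobar adjunction --- which in this setting is Quillen's adjunction between simply connected commutative dg algebras and connected dg Lie algebras, given by Chevalley--Eilenberg cochains and its adjoint --- is a Quillen equivalence over any $\Q$-algebra $R$, under which $\Lambda V\ot R$ corresponds to $\L W\ot R$. Indeed, over $\Q$ the Chevalley--Eilenberg cochains of the minimal chain Lie algebra model form a Sullivan model of $X$, and this persists after $-\ot R$ because $\Lambda V$ and $\L W$ are degreewise finite-dimensional, so the dualisations that intervene are harmless. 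A Quillen equivalence induces weak equivalences of derived mapping spaces, so $\map^h(\Lambda V\ot R,\Lambda V\ot R)\simeq\map^h(\L W\ot R,\L W\ot R)$ naturally in $R$; more generally $\map^h$ between two such models agrees with $\map^h$ between their Koszul duals, and it is this that produces the Eckmann--Hilton-type isomorphisms of sets of homotopy classes of maps announced in the abstract. Restricting to self equivalences and passing to $\pi_0$ gives the last natural isomorphism $\pi_0\haut(\Lambda V\ot R)\cong\pi_0\haut(\L W\ot R)$; chaining the three identifications yields the desired isomorphism of algebraic groups $\Aaut^h(\L W)\cong\Aaut^h(\Lambda V^{\leq n})$.

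\smallskip
\noindent\emph{The main difficulty.} I expect the crux to be naturality in $R$, as opposed to validity on $\Q$-points only: one must check that the obstruction-theoretic comparison of $\Lambda V^{\leq n}$ with $\Lambda V$ is functorial in $R$, so that it assembles into a natural transformation of representable functors and hence a morphism of group schemes, and that the Koszul-duality equivalence of mapping spaces is genuinely defined over $R$ and commutes with base change --- which is where degreewise finite-dimensionality of the models, guaranteed by $X$ being a finite complex, is essential. Granting that bookkeeping, the comparison of the full algebraic groups, including their non-unipotent parts, is then formal from Yoneda.
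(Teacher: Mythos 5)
Your high-level ingredients match the paper's (remove the truncation by an obstruction argument, then compare the two sides via the bar--cobar adjunction for the Koszul pair $(\Com,\Lie)$), but the route you take has genuine gaps at exactly the points the paper is written to address. First, you assert that the obstruction-theoretic identification of $\Aut^h(\Lambda V^{\leq n})$ with $\Aut^h(\Lambda V)$ ``goes through verbatim over $R$''. The argument you are quoting from item (i) of the introduction is the \emph{topological} one: realize the Sullivan algebra, use obstruction theory for Postnikov stages of spaces. That argument is only available over $\Q$ (the paper explicitly flags this, citing Quillen's Remark 3.11): there is no geometric realization of $\Lambda V\ot R$ for a general $\Q$-algebra, or even for $\overline{\Q}$ or $\C$. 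The whole of Section 2 of the paper exists to replace this with a purely algebraic extension-of-homotopies argument (Lemmas \ref{lemma:alpha-depends-on-beta}--\ref{lemma:surj}, Theorems \ref{thm:iso-of-htpy-classes}--\ref{thm:iso-of-auto}); you cannot cite the topological version and need to supply this algebraic content.

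Second, your functor-of-points framework rests on two unproved representability claims: that $\Aaut^h(A)=\Aaut(A)/\mathscr K(A)$ corepresents $R\mapsto\pi_0\haut(A\ot R)$ on \emph{all} finitely generated commutative $\Q$-algebras, and that the bar--cobar adjunction is a Quillen equivalence over every such $R$ compatibly with base change. The paper only establishes $\Aaut^h(A)(K)\cong\Aut^h(A\ot K)$ for \emph{fields} $K\supseteq k$, and even that uses Wilkerson's Galois-cohomology vanishing for unipotent groups over perfect fields (Lemma \ref{wilkerson}); the analogous statement over general $R$, and a homotopy theory of $P$-algebras over non-fields, are not free. The paper's proof deliberately sidesteps all of this: it constructs an explicit morphism of \emph{varieties} $\zeta\colon\Aaut(\mathfrak a)\to\Aaut(A_n)^{\op}$ by exhibiting the matrix entries of $\mathcal C^*(\varphi)$ (conjugated by fixed quasi-inverse quasi-isomorphisms $\eta,\nu$) as polynomials in the entries of $\varphi$, and then upgrades ``group homomorphism on $\overline k$-points'' and ``bijection on $\overline k$-points'' to ``morphism, respectively isomorphism, of algebraic groups'' via Lemma \ref{lemma:ZariskiThm} (Zariski's Main Theorem). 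The surjectivity and injectivity on $\overline k$-points in turn require the homotopy-preservation properties of $\mathcal C^*$ and $\mathcal L$ (Lemma \ref{lemma:preserve-htpy}), including the subtle point that $\mathcal L$ only preserves homotopies via a left-cylinder argument under a $1$-connectivity hypothesis. What you defer as ``bookkeeping'' and ``formal from Yoneda'' is in fact the substance of the proof, and as written the proposal does not close it.
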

A full proof is given in this paper. This result is an essential ingredient in \cite{espic-saleh} for proving that the group of homotopy classes of relative homotopy automorphisms is finitely presented. The result is also needed in \cite{kupers-Rel-SW} in which mapping class groups of manifolds with boundary are proved to be of finite type.
We will explain why such a result is needed. The rational homotopy theory of spaces of endomorphisms of a space $X$ that fix a subspace $A\subset X$ point-wise, which we will call the space of relative endomorphisms, is more tractable through the theory of dg Lie algebra models. In both \cite{espic-saleh} and \cite{kupers-Rel-SW} the authors are interested in rational homotopy theory of relative homotopy automorphisms. Thus, they are using the Lie approach and considering algebraic groups that have natural morphisms to $\Aaut^h(\L W)$. At this point, it is the interest of the authors to invoke a Lie version of the Sullivan version of the Sullivan-Wilkerson theorem \cite[Theorem 10.3]{sullivanRHT} (see also \cite{wilkerson} for a different approach), stating that the group of components of the homotopy automorphisms is commensurable up to finite kernel with an arithmetic subgroup of the algebraic group $\Aaut^h(\Lambda V^{\leq n})$. To the best of the author’s knowledge, it is not possible to adapt Sullivan’s arguments to the Lie case without using Sullivan models. We address this problem here by proving the theorem above, making it possible to invoke the Lie version of the Sullivan version of the Sullivan-Wilkerson theorem. 

In \cite{berglund-zeman}, the authors develop models for classifying spaces of fibrations with fibers being simply connected and either homotopic to a finite CW-complex or a finite Postnikov section. In loc. cit. it is used that  the homotopy classes of homotopy automorphisms of rationalizations of such spaces are the $\Q$-points of an algebraic group; $\Aaut^h(\L W)$ in the finite CW-complex case, and $\Aaut^h(\Lambda V)$ in the finite Postnikov section case. For an elliptic space, $\Aaut^h(\L W)$ and $\Aaut^h(\Lambda V)$ are a priori different, but by  Theorem \ref{main-thm-rht} of this paper we know that they are isomorphic and hence the two different ways of obtaining algebraic models for elliptic spaces are essentially equivalent.

\subsection{Pure algebraic proofs} Even though the isomorphism $\Aut^h(\Lambda V) \cong \Aut^h(\Lambda V^{\leq n})$ might be regarded as a pure algebraic statement, the proofs in the literature (known to the author) are based to some extent on topological arguments, justified through the following sequence of arguments (see e.g. \cite{BlockLazarev} or \cite{CostoyaViruel}): The group of homotopy classes of the automorphisms of $\Lambda V$, $\Aut^h(\Lambda V)$, is isomorphic to the group of homotopy classes of homotopy automorphisms of the geometric realization $\langle \Lambda V\rangle$, denoted by $\pi_0(\aut\langle \Lambda V\rangle)$. Using obstruction theory for lifts of maps of topological spaces, it is possible to prove that if $H^*(X)$ is concentrated in degrees $\leq n$ then $\pi_0(\aut(X))=\pi_0(\aut(X(n)))$ where $X(n)$ denotes the $n$'th Postnikov stage of $X$. Through the equivalence $\langle\Lambda V\rangle (n)\simeq \langle \Lambda V^{\leq n}\rangle$, and the arguments above, one deduces that $\Aut^h(\Lambda V) = \Aut^h(\Lambda V^{\leq n})$ whenever $H^*(\Lambda V)$ is concentrated in degrees $\leq n$.

A morphism of algebraic groups over a field $k$ of characteristic zero is an isomorphism if and only if the induced morphism on the $\overline k$-points is an isomorphism (where $\overline k$ denotes any algebraic closed field containing $k$). Hence we will be interested in the groups $\Aut^h(\Lambda V \ot_\Q \C)$ and $\Aut^h(\Lambda V^{\leq n}\ot_\Q \C)$, rather than $\Aut^h(\Lambda V)$ and $\Aut^h(\Lambda V^{\leq n})$. A natural question that arises is thus whether we still might identify $\Aut^h(\Lambda V \ot_\Q \C)$ with $\Aut^h(\Lambda V^{\leq n}\ot_\Q \C)$. The topological arguments mentioned above are not applicable for other fields than $\Q$ (see \cite[Remark 3.11]{quillenRHT}), so a completely pure algebraic proof is needed. Moreover, a pure algebraic proof enables us to prove analogous results for general operad algebras. The methods are standard but written out in Section \ref{sec:alg} since the author could not find written proofs in the literature. Essentially, we consider an algebraic counterpart to parts of the established topological obstruction theory.

\subsection{Eckmann-Hilton duals to some classical isomorphisms.} The general algebraic approach is applicable to chain Lie algebra models for spaces. As a result of this, we deduce some Eckmann-Hilton duals to some well-known facts about homotopy classes of maps between finite CW-complexes and the homotopy classes of maps between their Postnikov stages.

For this we need to consider a rational Eckmann-Hilton dual notion of Postnikov stages, which we call rational homology decomposition stages, where the $n$'th rational homology decomposition stage of $X$ is denoted by $\hd(X,n)$. We also need to restrict to connected spaces whose rational homology are sparse in the following sense: For any $k\in \Z$, at least one of the reduced homology groups $\tilde H^k(X;\Q)$ and $\tilde H^{k+1}(X;\Q)$ is trivial.

\begin{thm*}
 Let $X$ be either a simply connected space or a nilpotent space of finite $\Q$-type such that $\pi_*(X)\ot\Q$ is concentrated in degrees $\leq n-1$. 
    \begin{itemize}
        \item[(a)] For any simply connected space $Y$ there are isomorphisms of sets 
            $$[Y_\Q,X_\Q]_*\cong [\hd(Y,n),X_\Q]_* \cong [\hd(Y,n+1),X_\Q]_*\cong\dots.$$
        \item[(b)] If $X$ is simply connected and rational homological sparse, then  there are isomorphisms of sets and groups, respectively,
    $$
    [X_\Q,X_\Q]_*\cong[\hd(X,n),\hd(X,n)]_*\cong[\hd(X,n+1),\hd(X,n+1)]_*\cong\dots 
    $$
    and
    $$\pi_0(\aut_*(X_\Q)) \cong\pi_0(\aut_*(\hd(X,n))\cong\pi_0(\aut_*(\hd(X,n+1))\cong\dots.$$
    \end{itemize}
\end{thm*}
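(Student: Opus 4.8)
The plan is to set up an algebraic Eckmann-Hilton dual to the classical obstruction theory for Postnikov towers, working entirely on the level of dg Lie algebra models, and then transport the resulting algebraic statements back to topology. First I would recall that for a simply connected (or nilpotent, finite $\Q$-type) space $Z$, the minimal chain Lie algebra model $\L W_Z$ computes $[\,\cdot\,,Z_\Q]_*$ via homotopy classes of Lie algebra maps into $\L W_Z$, and that the rational homology decomposition stages $\hd(Z,n)$ correspond, under the Quillen-type equivalence, to the truncations $\L W_Z^{< n}$ (or the appropriate quotient Lie model obtained by killing generators in degrees $\geq n$, i.e.\ the Eckmann-Hilton dual of the Postnikov truncation $\Lambda V^{\leq n}$ on the Sullivan side). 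The generators of the minimal Lie model sit in degrees $s^{-1}\widetilde H_*(Z;\Q)$, so when $\pi_*(X)\ot\Q$ is concentrated in degrees $\leq n-1$ the Lie model $\L W_X$ is generated in degrees $< n$, hence $\L W_X \simeq \L W_X^{< m}$ for every $m \geq n$. This is exactly the Lie-algebra incarnation of the fact the paper proves in Section~\ref{sec:alg}, namely that a (co)operad algebra with finite-dimensional/degree-bounded generators is unchanged under sufficiently high truncation.

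Next, for part (a), I would argue that a map from an arbitrary simply connected $Y$ into $X$ only sees the generators of $\L W_X$, which live in degrees $< n$; concretely, a Lie map $\L W_Y \to \L W_X$ and its homotopies factor through the truncation $\L W_Y \to \L W_Y^{< m}$ whenever $m \geq n$, because every relation or homotopy that could obstruct or identify two such maps lands in degrees $\geq n$ in the target, where $\L W_X$ is zero on generators. The precise mechanism is the algebraic obstruction theory the paper promises: lifting a map along the tower of truncations $\cdots \to \L W_Y^{< m+1} \to \L W_Y^{< m} \to \cdots$ has obstructions and indeterminacy groups computed from $H_*$ of the target in a range that vanishes once $m \geq n$. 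Translating through $\hd(Y,m) \simeq \langle$model of $\L W_Y^{< m}\rangle$ (the Eckmann-Hilton dual of $\langle \Lambda V^{\leq m}\rangle \simeq X(m)$) gives the chain of bijections in (a). For part (b) one takes $Y = X$; the sparseness hypothesis on $\widetilde H^*(X;\Q)$ is what guarantees that the minimal Lie model has no two generators in consecutive degrees, so that the truncation $\L W_X^{< m}$ is again a \emph{minimal} Lie model (no new differentials are created by truncating, and no decomposables in the cut-off degree interfere), which is what lets the self-map statement — and in particular the group isomorphism on $\pi_0 \aut_*$ — go through rather than just a one-sided mapping-set statement. The self-map bijections are then the $m = \infty$ versus finite-$m$ comparison applied with source and target simultaneously truncated, using that $\L W_X \simeq \L W_X^{< m}$ on \emph{both} sides; restricting to invertible homotopy classes gives the $\pi_0\aut_*$ isomorphisms, and these are group isomorphisms because composition of Lie maps is compatible with truncation.

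I expect the main obstacle to be making the Eckmann-Hilton dictionary between $\hd(Z,n)$ and the Lie-model truncation $\L W_Z^{< n}$ fully rigorous, including checking that $\hd(Z,n)$ as defined (via the rational homology decomposition) really is the geometric realization of $\L W_Z^{< n}$ and that this realization is well-behaved for nilpotent spaces, not just simply connected ones — the classical homology decomposition is a colimit construction and one must verify it is dual to the limit (Postnikov) construction at the level of minimal models, which is where the sparseness condition does real work in part (b). A secondary technical point is ensuring the algebraic obstruction theory of Section~\ref{sec:alg}, which is stated for general operad algebras, specializes cleanly to chain Lie algebras and yields the vanishing of obstruction groups in the claimed range; this should be formal given that the obstruction and indeterminacy groups are built from the homology of the (co)operad algebra in degrees above the truncation, but one must be careful about indexing conventions ($s^{-1}$ shifts, homological vs.\ cohomological grading) so that ``$\pi_*(X)\ot\Q$ concentrated in degrees $\leq n-1$'' lines up with ``generators of $\L W_X$ in degrees $< n$'' and hence with the range in which truncation is an equivalence.
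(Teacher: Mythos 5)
Your overall strategy---reduce everything to the algebraic truncation/obstruction theorems of Section~\ref{sec:alg} applied to minimal chain Lie models, with $\hd(Z,k)$ realizing the truncation $\L(W^{\leq k-1})$---is exactly the paper's route. But there is a genuine error in how you let the hypothesis on $\pi_*(X)\ot\Q$ enter. On the Lie side the dictionary is: the \emph{generators} of the minimal model $\L W_X$ correspond to $s^{-1}\tilde H_*(X;\Q)$, while the \emph{homology} of $\L W_X$ corresponds to $\pi_{*+1}(X)\ot\Q$. So ``$\pi_*(X)\ot\Q$ concentrated in degrees $\leq n-1$'' tells you that $H^*(\L W_X)$ is concentrated in degrees $\leq n-2$; it does \emph{not} tell you that $\L W_X$ is generated in degrees $<n$, and your claim that $\L W_X\simeq \L W_X^{<m}$ for every $m\geq n$ is false in general. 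The paper's own running example kills it: $\CP^\infty_\Q=K(\Q,2)$ has $\pi_*\ot\Q$ concentrated in degree $2$, yet its minimal Lie model $\L(x_1,x_2,\dots)$ has a generator in every odd degree. Consequently the mechanism you propose for (a)---``a map into $X$ only sees the generators of $\L W_X$, which live in degrees $<n$''---is not the right one. What actually makes the extension and homotopy-extension arguments (the chain versions of Lemmas~\ref{lemma:extensions-of-htpy} and~\ref{lemma:surj}) go through is the vanishing of the homology of the \emph{target} Lie algebra $L$ in degrees $\geq n-1$: the obstruction to extending a map or a homotopy over the degree-$(k+1)$ generators of the \emph{source} $\L W_Y$ is a cycle in $L$ of degree $k$, and it bounds because $H^{\geq n-1}(L)=0$. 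Your later sentence about obstructions ``computed from $H_*$ of the target'' is the correct statement, but it contradicts your earlier setup, and your closing remark that the indexing should line up with ``generators of $\L W_X$ in degrees $<n$'' shows the confusion persists.

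Two smaller points. First, $\hd(Z,k)$ is \emph{defined} in the paper as the realization $|L_{k-1}|$ of the truncated minimal Lie model, so there is no separate comparison with a colimit-style homology decomposition to verify. Second, the role of sparseness in (b) is not that truncations of the minimal model remain minimal (they always do); it is needed because $dt$ has homological degree $-1$, so a homotopy $A_k\to A\ot_k\Lambda(t,dt)$ a priori only lands in $A_{k+1}\ot_k\Lambda(t,dt)$, and sparseness is what forces it back into $A_k\ot_k\Lambda(t,dt)$, giving the identification $[A_k,A]\cong[A_k,A_k]$ of Lemma~\ref{lemma:aa}; without that identification the self-map and $\pi_0(\aut_*)$ statements do not follow from the mapping-set statement.
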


\subsection{Generalization through operadic Koszul duality}
When $H^1(\Lambda V)=0$, the isomorphism $\Aut^h(\Lambda V)=\Aut^h(\L W)$ can be viewed as a manifestation of the Koszul duality between the commutative and the Lie operad. We take advantage of our algebraic proofs to prove analogous results for algebras over Koszul dual operads.
\\
\begin{center}\textbf{Acknowledgements}\end{center}
I would like to thank Alexander Kupers for informing Hadrien and me about the validity of the arguments in \cite{espic-saleh} relies on what now is the main theorem of this paper. I also thank him for answering many of my questions.

Thanks to Dan Petersen, Alexander Berglund and Nils Prigge for answering my questions and discussing  some of the material covered in this paper. Thanks to the anonymous referee for helpful comments. 

The author was supported by the Knut and Alice Wallenberg Foundation through grant no. 2022.0223.

\section{Extensions of maps and homotopies of algebras}\label{sec:alg}
In this section, we study the properties of homotopies of algebra maps and derive some isomorphisms of sets of homotopy classes of algebra morphisms.

We will use the language of operads, but the reader interested only in rational homotopy theory, might replace the term `cochain $P$-algebra' by a commutative chain algebra (cdga), and Sullivan $P$-algebras will be the ordinary Sullivan algebras. The reader migtht also replace the term `chain $P$-algebra' by a chain Lie algebra (dg Lie algebra), and 1-connected Sullivan chain $P$-algebras will be quasi-free 1-connected chain Lie algebra.

\subsection{Conventions and preliminaries}
\begin{conv}
We fix a field $k$ of characteristic zero and an  operad $P$ in the category of vector spaces over a field $k$. This operad can be viewed as an operad in the category of (co)chain complexes concentrated in (co)homological degree zero. We assume that $P$ is connected ($P(1)= k$) and that it is either unitary ($P(0)=k$) or reduced ($P(0)=0$). \end{conv}

In this paper, we will work with both \textit{chain} $P$-algebras (where the differential decreases the degree) and with \textit{cochain} $P$-algebras (where the differential increases the degree). The categories of chain and cochain algebras are equivalent (through the identification of cohomological degree $i$ with homological degree $-i$), but the notions of connectivity differs in an asymmetric way. For that reason, we will not work with a convention that fixes the sign of the degree of the differential, but rather work with both conventions in parallel.

The (co)homological degree part of a (co)homological graded object $C$ is denoted by $C^n$.

\subsubsection{(Co)Chain homotopy}
Given a (co)chain $P$-algebra $A$ and a (co)chain commutative algebra $\Omega$, the tensor product $A\ot_k \Omega$ is again a (co)chain $P$-algebra. Let $\Lambda(t,dt)$ be the unital free (co)chain commutative algebra generated by $t$ and $dt$, where the (co)homological degree of $t$ is 0 (and thus $dt$ is of cohomological degree $1$ in a cohomological convention and is of homological degree $-1$ in the homological convention).

\begin{dfn}
Two maps of (co)chain $P$-algebras $f,g\colon A\to B$ are said to be \textit{homotopic} if there exists a (co)chain P-algebra map
$$h\colon A\to B\ot_k\Lambda(t,dt)$$
such that $ev_0\circ h = f$ and $ev_1\circ h = g$. The map $h$ is called a \textit{homotopy} from $f$ to $g$.
\end{dfn}

In particular, $h$ is a sum of the form
$$
h = \sum_{i\geq 0}(\alpha_i t^i + \beta_i t^i dt)
$$
where $\alpha_i$ and $\beta_i$ are linear maps $A\to B$. If $h$ is a homotopy from $f$ to $g$ then
\begin{equation}\label{eqn:sum-ai}
\alpha_0 = f \qquad \text{and}\qquad \sum_{i\geq 0}\alpha_i = g.
\end{equation}

\subsubsection{Cochain connectivity}
A cochain $P$-algebra $A$ is said to be \textit{connected} if it is concentrated in non-negative degrees and if $A^0=P(0)$. We say that $A$ is \textit{1-connected} if $A$ is connected and  $A^1=0$.

\subsubsection{Chain connectivity}
We say that a chain  $P$-algebra $A$ is \textit{connected} if it is concentrated in non-negative homological degrees. We say that $A$ is \textit{1-connected} if it is connected and $A^0=P(0)$.

\subsubsection{1-connected minimal models}\label{subsubsec:mini}
A (co)chain $P$-algebra $(A,d)$ is called quasi-free if the underlying graded $P$-algebra $A$ is a free graded $P$-algebra, $A =P(V)$, generated by a graded vector space $V= \bigoplus_{i\in\Z} V^i$.
We say that a 1-connected quasi-free (co)chain $P$-algebra $(P(V),d)$ is \textit{minimal} if $d(V)\subset P^{(n\geq 2)}(V) = \bigoplus_{i\geq 2} P(i)\ot_{k[\Sigma_i]} V^{\ot i}$. By degree reasons, it follows that 
    $d(V^n)\subset P(V^{\leq n-1})$. 
\begin{rmk}
    This notion of minimality is only suitable for 1-connected (co)chain algebras over operads concentrated in degree 0. For a general notion of minimality, we refer to \cite{CiriciRoig}, where a quasi-free algebra is minimal if it is given by a direct limit of minimal KS-extensions. However, for a 1-connected quasi-free (co)chain $P$-algebra $(P(V),d)$, where $P$ is concentrated in degree 0, and where $(P(V),d)$ is minimal in our sense, we have that the sequence of inclusions
    $$
    P(0)\to P(V^1) \to P(V^{\leq 2}) \to \dots
    $$
    is a sequence of minimal KS-extensions in the sense of Cirici-Roig \cite{CiriciRoig}, and thus minimal in their sense as well.
\end{rmk}

By the theory of \cite{CiriciRoig} (which applies for a far wider variety of operads and algebras) we know the following:
\begin{itemize}
    \item[(a)] Simply connected (co)chain $P$-algebra admits minimal models, unique up to isomorphisms (\cite[Theorem 4.6, Theorem 5.3]{CiriciRoig}).
    \item[(b)] Quasi-isomorphisms between minimal (co)chain $P$-algebras are isomorphisms (\cite[Theorem 5.2]{CiriciRoig}).
\end{itemize}

\begin{conv}\label{conv:lowerindex}
    Given a 1-connected minimal (co)chain $P$-algebra $A=(P(V),d)$, we let $A_n\subset A$ denote the (co)chain subalgebra $(P(V^{\leq n}),d|_{P(V^{\leq n})})$. 
\end{conv}

\subsection{Homotopies and extensions} In this subsection, we consider a sort of obstruction theory for extension of algebra maps. The algebraic theory is elementary, but not covered in the literature known to the author (not even for Sullivan algebras).

We believe that this type of obstruction theory is not easily found in the standard literature for the following reason: Over $\Q$, the obstruction theory for extensions of maps (up to homotopy) from a Sullivan commutative cochain algebra, as developed here, can be derived in a different way by first taking realizations and then applying standard topological obstruction theory. This topological obstruction theory was established prior to the introduction of rational homotopy theory, and hence in those particular cases, there is no need for an algebraic obstruction theory. However, in this paper, we require an obstruction theory for algebras over algebraically closed fields containing the rationals, which means we cannot rely on the obstruction theory for spaces. 

We will only consider 1-connected (co)chain algebras, even though most of the statements in this section are true for connected cochain algebras that are minimal in the sense of \cite{CiriciRoig}. However, we have chosen to not treat these cases due to applications on mind and since these cases would make the proofs more complicated. 

It is also possible to generalize some of the results to certain operads that has a non-trivial (co)homological grading.

\begin{lemma}\label{lemma:alpha-depends-on-beta}
Let $h =  \sum_{i\geq 0}(\alpha_i t^i + \beta_i t^i dt)$ be a homotopy $h\colon A\to B\ot_k\Lambda(t,dt)$ of (co)chain $P$-algebras. It follows that $\alpha_i$ commutes with the differential and that
\begin{equation}\label{eqn:alpha=[d,beta]}
-\alpha_{i+1} = \frac{d\beta_i+\beta_i d}{i+1},\quad i\geq0.
\end{equation}
In particular, a homotopy is completely determined by $\alpha_0$ and $\{\beta_i\}_{i\geq 0}$.
\end{lemma}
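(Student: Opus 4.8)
The plan is to extract the claimed identities directly from the single hypothesis that $h$ is a map of (co)chain $P$-algebras, i.e.\ that $h$ is $P$-multilinear and commutes with the differentials. First I would fix the sign conventions: on $B\ot_k\Lambda(t,dt)$ the differential acts by $D(b\ot t^i) = (db)\ot t^i$ and $D(b\ot t^i dt) = (db)\ot t^i dt \pm i\, b\ot t^{i-1}dt$ (with the sign determined by the degree of $b$ and the convention in force), since $d(t^i) = i t^{i-1}dt$ and $d(t^i dt)=0$. The condition $D\circ h = h\circ d$, applied to an element $a\in A$, expands as $D\big(\sum_i(\alpha_i(a)t^i + \beta_i(a)t^i dt)\big) = \sum_i(\alpha_i(da)t^i + \beta_i(da)t^i dt)$.

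Next I would compare coefficients of the linearly independent elements $t^i$ and $t^i dt$ of $\Lambda(t,dt)$ on the two sides. The coefficient of $t^i$ gives $d\alpha_i(a) = \alpha_i(da)$, i.e.\ $\alpha_i$ is a chain map (commutes with $d$ up to the Koszul sign, which is what "commutes with the differential" means for a degree-$0$ map). The coefficient of $t^i dt$ collects the term $(d\beta_i)(a)t^i dt$ coming from differentiating $\beta_i(a)t^i dt$, together with the term $\pm (i+1)\alpha_{i+1}(a)t^i dt$ coming from differentiating $\alpha_{i+1}(a)t^{i+1}$ (since $d(t^{i+1}) = (i+1)t^i dt$); setting this equal to $\beta_i(da)t^i dt$ and rearranging yields $(i+1)\alpha_{i+1} = -(d\beta_i + \beta_i d)$ (again up to the sign bookkeeping, which I would carry carefully once and for all), which is precisely \eqref{eqn:alpha=[d,beta]}. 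The final "in particular" clause is then immediate: $\alpha_0$ and the $\beta_i$ determine every $\alpha_{i+1}$ via the recursion, hence determine $h$ entirely.

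The main obstacle, and really the only subtle point, is the sign/degree bookkeeping: $dt$ has (co)homological degree $\pm 1$, so moving $d$ past $\beta_i(a)$ or past $t^i$ introduces Koszul signs depending on the degree of $a$ and on whether we are in the chain or cochain convention, and one must check that these signs are consistent with the stated formula $-\alpha_{i+1} = (d\beta_i+\beta_i d)/(i+1)$ in \emph{both} conventions simultaneously. I would handle this by writing $D$ explicitly on $B\ot\Lambda(t,dt)$ as a tensor-product differential, applying it termwise, and reading off coefficients; everything else is a routine comparison of coefficients in the free module $B\ot\Lambda(t,dt)$ over the basis $\{t^i, t^i dt\}_{i\ge 0}$.
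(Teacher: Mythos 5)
Your proposal is correct and follows essentially the same route as the paper: expand $dh(x)$ and $hd(x)$ in the basis $\{t^i, t^i dt\}$ of $\Lambda(t,dt)$, read off that the $t^i$-coefficients give $d\alpha_i = \alpha_i d$ and the $t^i dt$-coefficients give $(i+1)\alpha_{i+1} = -(d\beta_i + \beta_i d)$, with the only delicate point being the Koszul signs (which the paper records as a factor $(-1)^{|x|}$ appearing on both sides and cancelling). Nothing further is needed.
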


\begin{proof}
For an arbitrary element $x\in A$, we have that  
$$
dh(x) = \sum_{i\geq 0}d\alpha_i(x)t^i + \sum_{i\geq 0}(-1)^{|x|}((i+1)\alpha_{i+1}(x) +d\beta_i(x)) t^idt
$$
and
$$
hd(x) = \sum_{i\geq 0}\alpha_i(dx)t^i+\sum_{i\geq 0}(-1)^{|x|+1}\beta_i(dx)t^idt 
$$
Since $h$ commutes with the differential, it follows by above expansions that $d\alpha_0=\alpha_0d$ and that $(i+1)\alpha_{i+1} = -(\beta_id + d\beta_i)$.
\end{proof}

\begin{lemma}\label{lemma:extensions-of-htpy}
Let $A =P(V)$ be a 1-connected minimal quasi-free (co)chain $P$-algebra  and let $B$ be a (co)chain $P$-algebra whose (co)homology is concentrated in (co)homological degrees $\leq n$.
Given a homotopy
$$
h_n\colon A_n= P(V^{\leq n})\to B\ot_k\Lambda(t,dt)
$$
from a map $f_n$ to a map $g_n$ and given extensions $f_{n+1}$ and $g_{n+1}$  of $f_n$ and $g_n$, respectively, to $A_{n+1} = P(V^{\leq n+1})$, there is an extension $h_{n+1}\colon A_{n+1}\to B \ot_k \Lambda(t,dt)$  of $h_n$ that defines a homotopy  from $f_{n+1}$ to $g_{n+1}$.
\end{lemma}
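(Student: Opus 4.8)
The plan is to build the extension $h_{n+1}$ generator by generator. Since $A_{n+1} = P(V^{\leq n+1})$ is free over $A_n$ on the generators $V^{n+1}$, and the homotopy $h_n$ is already defined on $A_n$, it suffices — by freeness and multiplicativity — to specify where each generator $v \in V^{n+1}$ goes under $h_{n+1}$, subject to the two constraints: $h_{n+1}$ must commute with the differential, and its endpoints must be the prescribed maps $f_{n+1}, g_{n+1}$. Writing $h_{n+1} = \sum_{i\geq 0}(\alpha_i t^i + \beta_i t^i dt)$, Lemma \ref{lemma:alpha-depends-on-beta} tells us that the whole homotopy is determined by $\alpha_0$ and the family $\{\beta_i\}_{i\geq 0}$, with $\alpha_0 = f_{n+1}$ forced and $\alpha_{i+1} = -(d\beta_i + \beta_i d)/(i+1)$. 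So the real task is to produce linear maps $\beta_i \colon V^{n+1} \to B$ (extending the given $\beta_i$ on $V^{\leq n}$) such that the resulting $\alpha_i$ automatically commute with the differential and satisfy $\sum_{i\geq 0}\alpha_i = g_{n+1}$ on $V^{n+1}$.

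First I would reduce the endpoint condition to a cocycle statement. On a generator $v \in V^{n+1}$, $dv$ lies in $P(V^{\leq n})$, where $h_n$ is already defined; so $\alpha_0(v) = f_{n+1}(v)$ is prescribed, and the compatibility $\alpha_i d = d\alpha_i$ on $v$ becomes a recursive constraint relating $\beta_i(v)$ to quantities already known on $P(V^{\leq n})$. The key observation is that $g_{n+1}(v) - \sum_{i\geq 1}\alpha_i(v)$, where the $\alpha_i$ for $i\geq 1$ are built from a chosen extension of the $\beta_i$, will differ from $f_{n+1}(v)$ by a cycle in $B$ of the appropriate degree (this uses that $f_{n+1}$ and $g_{n+1}$ both commute with $d$ and agree with $f_n, g_n$ on $A_n$, and that $h_n$ is a genuine homotopy so the relations hold one level down). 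Then I would use the hypothesis that $H^*(B)$ vanishes in degrees $> n$: the generator $v$ has degree $n+1$, so the relevant cycle, living in degree related to $|v|$ shifted by the degree of $dt$, sits in a range where $H^*(B)$ is zero (in the cochain convention $v$ has degree $n+1$ and the obstruction class lives in $H^{n+1}(B)$ — wait, one must check the exact degree, but the point is it lands in the vanishing range, or is a boundary), hence the cycle is a boundary. Choosing a bounding element as the correction term in $\beta_0(v)$ (or the appropriate $\beta_i(v)$) kills the discrepancy and yields a homotopy with the correct endpoint $g_{n+1}$.

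Concretely I would proceed as follows: (1) for each $v \in V^{n+1}$, set $\alpha_0(v) := f_{n+1}(v)$; (2) choose arbitrary extensions $\beta_i(v)$ of the known $\beta_i$ and define $\alpha_{i+1}(v) := -(d\beta_i(v) + \beta_i(dv))/(i+1)$ — note $\beta_i(dv)$ is already determined since $dv \in P(V^{\leq n})$, so only the $d\beta_i(v)$ term involves the new choice; (3) verify, using Lemma \ref{lemma:alpha-depends-on-beta} applied to $h_n$ and the fact that $f_{n+1}, g_{n+1}$ commute with $d$, that $z := g_{n+1}(v) - f_{n+1}(v) - \sum_{i\geq 1}\alpha_i(v)$ is a cycle independent of the arbitrary choice up to a boundary; (4) since this cycle sits in the degree range where $H^*(B) = 0$, write $z = d w$ for some $w \in B$, and redefine $\beta_0(v)$ by adding $-w$ (or $+w$, up to sign), which adjusts $\alpha_1(v)$ by $+dw = z$ and hence makes $\sum_{i \geq 0}\alpha_i(v) = g_{n+1}(v)$; (5) check that this adjustment does not disturb the $d$-commutation of the $\alpha_i$, which follows because $\alpha_1(v) - (\text{adjustment})$ changes by a boundary and the recursion from Lemma \ref{lemma:alpha-depends-on-beta} is designed exactly so that the $\alpha_i$ commute with $d$ whenever $\alpha_0$ does. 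The main obstacle — and the step needing genuine care — is item (3)–(4): correctly identifying the obstruction cocycle, checking it is closed, and pinning down its exact degree so that the vanishing of $H^*(B)$ above degree $n$ applies (the bookkeeping differs between the chain and cochain conventions, and one must track the degree shift coming from $t\,dt$). Everything else is a routine consequence of freeness of $A_{n+1}$ over $A_n$, multiplicativity of $P$-algebra maps, and the already-established Lemma \ref{lemma:alpha-depends-on-beta}.
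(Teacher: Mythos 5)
Your proposal is correct and follows essentially the same route as the paper's proof: reduce to generators $v\in V^{n+1}$ via freeness and Lemma \ref{lemma:alpha-depends-on-beta}, identify the obstruction cocycle $g_{n+1}(v)-f_{n+1}(v)+\sum_{i\ge 0}\tfrac{\beta_i(dv)}{i+1}$ in degree $n+1$ (yours differs from it only by the boundary $\sum_i \tfrac{d\beta_i(v)}{i+1}$ coming from your provisional choice of $\beta_i(v)$), and use $H^{n+1}(B)=0$ to absorb it into $\beta_0(v)$. The only difference is cosmetic: the paper sets $\beta_i(v)=0$ for $i\ge 1$ from the start rather than choosing arbitrarily and correcting afterwards.
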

\begin{proof}
 We need to determine the restrictions of $\alpha_i$ and $\beta_i$, $i\geq 0$,  to  $V^{n+1}$. By Lemma \ref{lemma:alpha-depends-on-beta}, it is enough to determine $\alpha_0$ and $\beta_i$, $i\geq 0$ restricted to $V^{n+1}$. We start by fixing a basis for $V^{n+1}$. For a basis vector $v$ we do the following:

    We set $\alpha_0(v) = f_{n+1}(v)$ (there is no other choice by \eqref{eqn:sum-ai}).
    Since $v\in V^{n+1}$, we have minimality that  $dv\in A_n$ (see § \ref{subsubsec:mini}), and thus $\beta_i(dv)$ is already determined. We observe that
    \begin{align*}
d\left(
 g_{n+1}(v)-f_{n+1}(v)+\sum_{i\geq 0}\frac{\beta_id(v)}{i+1}
 \right) &= g_n(dv)-f_n(dv)+\sum_{i\geq 0}\frac{d\beta_id(v)}{i+1}\\
 &=g_n(dv)-f_n(dv)+\sum_{i\geq 0}\frac{d\beta_id(v)+\beta_i d^2(v)}{i+1}\\
 &\overset{\eqref{eqn:alpha=[d,beta]}}{=} g_n(dv)-f_n(dv)-\sum_{i\geq 0}\alpha_{i+1}(dv)\\
 &\overset{\eqref{eqn:sum-ai}}= g_n(dv)-f_n(dv)- (g_n(dv)-f_n(dv)) = 0.
\end{align*}

 Hence $ g_{n+1}(v)-f_{n+1}(v)+\sum_{i\geq 0}\frac{\beta_id(v)}{i+1}$ is a cycle of degree $n+1$ and thus also a boundary (since $H^{n+1}(B)=0$). Pick some $z\in B$ such that $dz =  g_{n+1}(v)-f_{n+1}(v)+\sum_{i\geq 0}\frac{\beta_id(v)}{i+1}$ and let $\beta_0(v)=z$ and $\beta_i(v)=0$ for $i\geq 0$.

Doing this for every basis vector, we get extensions of  $\alpha_0$ and $\{\beta_i\}_{i\geq 0}$ to $V^{\leq n+1}$, which defines a map of graded algebras $h_{n+1}\colon A_{n+1}\to B\ot_k\Lambda(t,dt)$.
Straightforward expansions of $h(dv)$ and $dh(v)$ for $v\in V^{n+1}$, shows that $h$ commutes with the differential, and hence is a morphism (co)chain $P$-algebras thus defining a homotopy.

 We have then that $ev_0\circ h_{n+1}(v) = \alpha_0(v)= f_{n+1}(v)$ and that 
 \begin{align*}
     ev_1\circ h_{n+1}(v)&= \sum_{i\geq 0}\alpha_i(v) 
     \\& = f_{n+1}(v) - \sum_{i\geq 0}\frac{d\beta_i(v)+\beta_i(dv)}{i+1}\\
     &= f_{n+1}(v)-d\beta_0(v) -\sum_{i\geq 0}\frac{\beta_id(v)}{i+1}\\
     &= f_{n+1}(v) +\left(g_{n+1}(v)-f_{n+1}(v)+\sum_{i\geq 0}\frac{\beta_id(v)}{i+1} \right)-\sum_{i\geq 0}\frac{\beta_id(v)}{i+1}\\
     &=g_{n+1}(v)
 \end{align*}

This proves that $h_{n+1}$ is the desired extended homotopy.\end{proof}

\begin{lemma}\label{lemma:inj}
Under the assumptions of Lemma \ref{lemma:extensions-of-htpy}, the restriction map $\map(A_{n+1},B)\to \map(A_n,B)$
induces an injective map on the set of homotopy classes of maps $[A_{n+1},B]\to [A_n,B]$.
\end{lemma}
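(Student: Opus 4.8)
The plan is to read the statement off directly from Lemma \ref{lemma:extensions-of-htpy}: essentially all the work has already been done there, and the present claim is a short bookkeeping consequence.

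First I would record that the restriction map descends to homotopy classes at all. If $H\colon A_{n+1}\to B\ot_k\Lambda(t,dt)$ is a homotopy between maps $F,G\colon A_{n+1}\to B$, then precomposing $H$ with the inclusion $A_n\hookrightarrow A_{n+1}$ (Convention \ref{conv:lowerindex}) yields a homotopy $H|_{A_n}$ from $F|_{A_n}$ to $G|_{A_n}$, since $ev_0$ and $ev_1$ commute with restriction. Hence $[A_{n+1},B]\to[A_n,B]$ is well defined. At this point one should also recall — or quickly check by gluing homotopies, again via a Lemma \ref{lemma:extensions-of-htpy}-type extension argument — that the homotopy relation on maps out of a $1$-connected minimal quasi-free (co)chain $P$-algebra is an equivalence relation, so that these homotopy sets genuinely make sense; I would only remark on this rather than reprove it.

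For injectivity I would take $f_{n+1},g_{n+1}\colon A_{n+1}\to B$ whose classes have the same image, i.e.\ whose restrictions $f_n:=f_{n+1}|_{A_n}$ and $g_n:=g_{n+1}|_{A_n}$ are homotopic, and fix a homotopy $h_n\colon A_n\to B\ot_k\Lambda(t,dt)$ from $f_n$ to $g_n$. Applying Lemma \ref{lemma:extensions-of-htpy} to $h_n$ together with the prescribed extensions $f_{n+1}$ and $g_{n+1}$ produces an extension $h_{n+1}\colon A_{n+1}\to B\ot_k\Lambda(t,dt)$ of $h_n$ that is a homotopy from $f_{n+1}$ to $g_{n+1}$; therefore $[f_{n+1}]=[g_{n+1}]$ in $[A_{n+1},B]$, which is exactly the asserted injectivity.

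There is no real obstacle remaining at this stage: the genuinely delicate point — the stepwise construction of the homotopy on the new generators $V^{n+1}$, which uses minimality (so that $dv\in A_n$) and the vanishing of $H^{n+1}(B)$ to solve the relevant cocycle-to-coboundary problem — was already isolated and dispatched in Lemma \ref{lemma:extensions-of-htpy}. So the proof I envisage consists of just the two short steps above.
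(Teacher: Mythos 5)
Your proposal is correct and follows exactly the paper's argument: restriction of a homotopy along $A_n\hookrightarrow A_{n+1}$ gives well-definedness, and injectivity is an immediate application of Lemma \ref{lemma:extensions-of-htpy} to a homotopy between the restrictions together with the given extensions. Your extra remark about the homotopy relation being an equivalence relation is a reasonable aside but not something the paper addresses here.
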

\begin{proof}
Given a homotopy $h_{n+1}\colon A_{n+1}\to B\ot_k \Lambda(t,dt)$ from $f$ to $g$, we have that its restriction to $A_n$ defines a homotopy from the restriction of $f$ to $A_n$ to the restriction of $g$ to $A_n$. Hence, there is a well-defined map $[A_{n+1},B]\to [A_n,B]$, $[f] \mapsto [f|_{A_n}]$. It is injective since homotopies can be extended by the previous lemma, Lemma \ref{lemma:extensions-of-htpy}.
\end{proof}

We aim to determine when the restriction induced map $[A_{n+1},B]\to [A_n,B]$ is an isomorphism. We deduced injectivity whenever the (co)homology of $B$ is concentrated in degrees less or equal to $n$ by showing that extensions to homotopic maps are homotopic. In order to prove surjectivity, we prove that extensions always exist.

\begin{lemma}\label{lemma:surj}
\begin{itemize}
    \item[(a)] (Cochain version) Let $A =(P(V),d_A)$ be a 1-connected minimal quasi-free cochain $P$-algebra  and let $B$ be a cochain $P$-algebra whose cohomology is concentrated in cohomological degrees $\leq n$.
    Given a map $f_{n-1}\colon A_{n-1}\to B$, there is an extension $f_n\colon A_n\to B$.
    
    \item[(b)] (Chain version) Let $A =(P(V),d_A)$ be a 1-connected minimal quasi-free chain $P$-algebra  and let $B$ be a chain $P$-algebra whose homology is concentrated in homological degrees $\leq n$. Given a map $f_{n+1}\colon A_{n+1}\to B$, there is an extension $f_{n+2}\colon A_{n+2}\to B$.
\end{itemize}
\end{lemma}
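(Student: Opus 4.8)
The plan is to build the extension one basis vector at a time, exactly as in the proof of Lemma \ref{lemma:extensions-of-htpy}, the only difference being that now we are extending a single map rather than a homotopy, so the relevant obstruction lives one degree higher. I will treat the cochain case (a); the chain case (b) is the mirror image under the degree-reversing equivalence, and I will indicate at the end what changes.

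First I would fix a basis for $V^n$. For a basis vector $v\in V^n$, minimality (see §\ref{subsubsec:mini}) gives $d_A v\in P(V^{\leq n-1})=A_{n-1}$, so $f_{n-1}(d_A v)\in B$ is already defined and makes sense. The element $f_{n-1}(d_A v)$ is a cocycle in $B$: indeed $d_B f_{n-1}(d_A v)=f_{n-1}(d_A^2 v)=0$ since $f_{n-1}$ is a chain map. Now $f_{n-1}(d_A v)$ has cohomological degree $n+1$ (as $v$ has degree $n$ and $d_A$ raises degree by one in the cochain convention), and by hypothesis $H^{n+1}(B)=0$, so this cocycle is a coboundary: pick $w\in B^n$ with $d_B w=f_{n-1}(d_A v)$ and set $f_n(v):=w$. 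Doing this for every basis vector of $V^n$ and extending multiplicatively (using freeness of $P(V^{\leq n})$ as a graded $P$-algebra) defines a morphism of graded $P$-algebras $f_n\colon A_n\to B$. A straightforward check on generators — it agrees with $f_{n-1}$ on $V^{\leq n-1}$ by construction, and $d_B f_n(v)=d_B w=f_{n-1}(d_A v)=f_n(d_A v)$ on the new generators $v\in V^n$ — shows $f_n$ is a morphism of cochain $P$-algebras extending $f_{n-1}$.

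For the chain version (b), the same argument applies with degrees reversed: for a basis vector $v\in V_{n+2}$ (homological degree $n+2$), minimality gives $d_A v\in P(V^{\leq n+1})=A_{n+1}$, the element $f_{n+1}(d_A v)$ is a cycle of homological degree $n+1$, and since $H_{n+1}(B)=0$ it is a boundary, so we may choose a preimage in $B_{n+2}$ and declare it to be $f_{n+2}(v)$; extend multiplicatively and check compatibility with differentials on generators as before. The one point worth care is the bookkeeping of where the obstruction sits: in the cochain case an extension over $V^n$ requires $H^{n+1}(B)=0$, whereas in the chain case an extension over $V_{n+2}$ requires $H_{n+1}(B)=0$ — the asymmetry is exactly the asymmetry in the notions of connectivity flagged in §2.1, and it is the only subtle point; the rest is the routine verification that a multiplicative extension defined on generators is a chain map, which I would state but not belabor.
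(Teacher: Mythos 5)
Your proof is correct and is essentially the argument in the paper: the obstruction to extending over a generator $v\in V^n$ is the cocycle $f_{n-1}(d_Av)$ of degree $n+1$, which is exact because $H^{n+1}(B)=0$, and the degree shift in the chain case is handled exactly as you describe. The paper merely packages the basis-by-basis construction via the presentation of $A_n$ as a pushout (KS-extension) of $A_{n-1}$ along $P(sV^n)\hookrightarrow P(sV^n\oplus V^n)$ and invokes the universal property, which is the same construction in different clothing.
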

\begin{proof} (a)
$A_n$ is by definition given as a pushout
$$
\xymatrix{
(P(sV^{n}),d(sv)=0)\ar[r]^-{sv\mapsto d_A(v)}\ar@{^(->}[d] & A_{n-1}\ar[d] \\
(P(sV^n\oplus V^n),d(v)=sv)\ar[r] & A_n   
},
$$
where $sV^n$ is the suspension of $V^n$, concentrated in degree $n+1$.

The composition $F\colon P(sV^{n})\to  A_{n-1}\xrightarrow{f_{n-1}} B$ lands in $Z^{\geq n+1}(B)$ and hence in $B^{\geq n+1}(B)$ (since $H^{\geq n+1}(B)=0$). We fix a basis for $V^{n+1}$ and for every basis vector $v$, we fix an element $b_v$ where $d_B(b_v)= F(v)$. 
Thus, there is a commutative diagram 
$$
\xymatrix{
(P(sV^{n}),d(sv)=0)\ar[rr]^-{sv\mapsto d_A(v)}\ar@{^(->}[d] && A_{n-1}\ar[d]^{f_{n-1}} \\
(P(sV^n\oplus V^n),d(v)=sv)\ar[rr]_-{\begin{smallmatrix}
sv\mapsto F(v)\\
v\mapsto b_v
\end{smallmatrix}
} && B  
}
$$
By the universal property of pushouts there is a map $f_n\colon A_n\to B$, extending $f_{n-1}$.

(b) The proof follows the same route except that we need to replace $sV^n$ by $s^{-1}V^{n+2}$.
\end{proof}

\begin{thm}\label{thm:iso-of-htpy-classes}
\begin{itemize}
    \item[(a)] (Cochain version)  Let $A =P(V)$ be a 1-connected minimal quasi-free cochain $P$-algebra  and let $B$ be a cochain $P$-algebra whose cohomology is concentrated in cohomological degrees $\leq n$. Then 
    $$[A_n,B]\cong[A_{n+1},B]\cong \dots\cong [A,B]$$
    \item[(b)] (Chain version) Let $A =P(V)$ be a 1-connected minimal quasi-free chain $P$-algebra  and let $B$ be a chain $P$-algebra whose homology is concentrated in homological degrees $\leq n$.  Then 
    $$[A_{n+1},B]\cong[A_{n+2},B]\cong \dots\cong [A,B]$$
\end{itemize}
\end{thm}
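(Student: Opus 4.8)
The plan is to assemble the theorem from the three preceding lemmas, after first reducing the claim about $[A,B]$ to a statement about the finite stages $A_n, A_{n+1}, \dots$. I will treat the cochain case (a); the chain case (b) is identical after the index shift already recorded in Lemma \ref{lemma:surj}(b). The key structural input is that $A = P(V) = \operatorname{colim}_m A_m$, where $A_m = (P(V^{\leq m}), d)$ and the maps in the colimit are the inclusions $A_m \hookrightarrow A_{m+1}$; this is immediate from minimality, since $d(V^m) \subset P(V^{\leq m-1})$ ensures each $A_m$ is a genuine subalgebra and their union is all of $A$.

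First I would establish the finite-stage statement: for every $m \geq n$ the restriction map $[A_{m+1}, B] \to [A_m, B]$ is a bijection. It is injective by Lemma \ref{lemma:inj} (applied with $m$ in place of $n$, which is legitimate since the cohomology of $B$ being concentrated in degrees $\leq n$ is also concentrated in degrees $\leq m$). For surjectivity, given a map $f_m \colon A_m \to B$, Lemma \ref{lemma:surj}(a) — again applied with the hypothesis $H^{\geq m+1}(B) = 0$, which holds since $m \geq n$ — produces an extension $f_{m+1} \colon A_{m+1} \to B$ whose restriction to $A_m$ is $f_m$, so $[f_{m+1}] \mapsto [f_m]$. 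This gives the chain of bijections $[A_n, B] \cong [A_{n+1}, B] \cong [A_{n+2}, B] \cong \dots$.

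It remains to identify $\operatorname{colim}_m [A_m, B]$ — equivalently, the common value of all $[A_m, B]$ for $m \geq n$ — with $[A, B]$. Since $A = \operatorname{colim}_m A_m$, any algebra map $A \to B$ restricts to a compatible family $(f_m \colon A_m \to B)_m$, and conversely any compatible family glues to a unique map on the colimit; the same holds for homotopies, since $B \otimes_k \Lambda(t,dt)$ is fixed and $- \otimes_k \Lambda(t,dt)$ commutes with the relevant colimits, so $\operatorname{map}(A, B) = \lim_m \operatorname{map}(A_m, B)$ and likewise for the path objects. The point I would emphasize is that the restriction maps $[A_{m+1}, B] \to [A_m, B]$ are bijections for $m \geq n$, so the tower $\{[A_m, B]\}_{m \geq n}$ is eventually constant; hence there is no $\lim^1$ obstruction and $[A, B] = \lim_m [A_m, B] = [A_n, B]$. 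Concretely: surjectivity of $[A,B] \to [A_n,B]$ follows by repeatedly applying Lemma \ref{lemma:surj}(a) to build a coherent sequence of extensions $f_n, f_{n+1}, \dots$ and passing to the colimit; injectivity follows by repeatedly applying Lemma \ref{lemma:extensions-of-htpy} to extend a homotopy stage by stage and passing to the colimit.

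The main obstacle is the last step — verifying carefully that homotopy and colimit interact correctly, i.e.\ that a tower of extensions assembled one degree at a time genuinely defines a map (resp.\ homotopy) out of $A$, and that two maps out of $A$ agreeing after restriction to some $A_m$ (with $m \geq n$) are actually homotopic through a homotopy defined on all of $A$. Both reduce to the observation that $A \otimes \Lambda(t,dt) = \operatorname{colim}_m (A_m \otimes \Lambda(t,dt))$ together with the stagewise extension lemmas, but it is worth writing out that the homotopy produced by iterating Lemma \ref{lemma:extensions-of-htpy} is well-defined on the colimit because at each stage it restricts to the previously constructed one.
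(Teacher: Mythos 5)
Your proposal is correct and follows essentially the same route as the paper: injectivity of the restriction maps between consecutive finite stages via Lemma \ref{lemma:inj}, surjectivity via Lemma \ref{lemma:surj}, with the index shift handling the chain case. The only difference is that you spell out the passage to $[A,B]$ itself via the colimit $A=\operatorname{colim}_m A_m$ and the stagewise extension of maps and homotopies, a step the paper's proof leaves implicit; your concrete argument there (iterate the extension lemmas, check each stage restricts to the previous one) is the right one, and is more reliable than the loose $\lim^1$ remark since these are towers of sets rather than groups.
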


\begin{proof}
    (a) The (well-defined) map $[A_{n+k},B]\to [A_{n+k-1},B]$, $[f]\mapsto [f|_{A_{n+k-1}}]$ is injective for $k\geq 1$ by Lemma \ref{lemma:inj}. It is surjective for $k\geq 0$ by Lemma \ref{lemma:surj}.

    (b) This is proved similarly.
\end{proof}

\subsection{Extensions of endomorphisms and automorphisms}
Given a quasi-free (co)chain $P$-algebra  $A=(P(V),d)$, we might identify $\map(A_n,A)$ with $\map(A_n,A_n)$, since any map $A_n\to A$ has an image in $A_n\subset A$. However, we can only identify $[A_n,A]$ with $[A_n,A_n]$ if $A$ is a cochain algebra (and not in general if $A$ is a chain algebra). We explain why: In the cohomological setting, $dt$ is of cohomological degree 1, hence the image of any homotopy $h\colon A_n\to A\ot_k\Lambda(t,dt)$ is a subspace of $A_n\ot_k\Lambda(t,dt)$. Consequently, $h$ might be viewed as a homotopy of endomorphisms of $A_n$. However, in the homological setting, $dt$ is of homological degree $-1$, and thus the image of $h$ is a subspace of $A_{n+1}\ot_k \Lambda(t,dt)$. However, for a special class of chain algebras, such identifications are still possible.

\begin{dfn}
    We say that a quasi-free chain $P$-algebra $P(V)$ is \textit{sparsely generated} if for any consecutive pair of integers $k,k+1$, at least one of the vector spaces, $V^k$ and $V^{k+1}$, is trivial.
\end{dfn}

\begin{rmk}
    A 1-connected sparsely generated chain $P$-algebra is automatically minimal.
\end{rmk}

By degree reasons, we have the following.
\begin{lemma}\label{lemma:aa}
    Let $A=P(V)$ be either a 1-connected minimal cochain $P$-algebra or a 1-connected sparsely generated chain $P$-algebra.
    Then there is an isomorphism of sets $[A_n,A]\cong [A_n,A_n]$.
\end{lemma}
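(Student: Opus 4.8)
The plan is to reduce the statement about homotopy classes to the already-established identification at the level of mapping spaces, using the degree bookkeeping sketched in the paragraph preceding Lemma \ref{lemma:aa}. First I would recall that $\map(A_n,A)=\map(A_n,A_n)$ on the nose, since any $P$-algebra map out of $A_n=P(V^{\leq n})$ is determined by where it sends $V^{\leq n}$, and for degree reasons the image of $V^{\leq n}$ lands in $P(V^{\leq n})=A_n$ (here one uses that $P$ is concentrated in degree $0$, so $P(W)$ is generated in the same degrees as $W$). Thus the only content is that a homotopy $h\colon A_n\to A\ot_k\Lambda(t,dt)$ can be taken to have image in $A_n\ot_k\Lambda(t,dt)$, i.e. that the homotopy relation computed in $A$ agrees with the one computed in $A_n$.

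The key step is the degree analysis of the components of $h=\sum_{i\ge 0}(\alpha_i t^i+\beta_i t^i dt)$. Each $\alpha_i\colon A_n\to A$ is a chain map of degree $0$, so $\alpha_i(V^{\le n})\subset A^{\le n}\cap A=P(V^{\le n})=A_n$; hence the $\alpha_i$-part of any homotopy automatically stays inside $A_n$. The only possibly problematic piece is $\beta_i$, which has degree equal to the degree of $dt$. In the cochain case $\deg(dt)=+1$, so $\beta_i(V^{\le n})\subset A^{\le n+1}$, and since $A$ is concentrated in non-negative degrees and $\beta_i$ raises degree, $\beta_i(V^{\le n})\subset P(V^{\le n+1})$; but in fact minimality forces $d(V^{n+1})\subset P(V^{\le n})$, and more to the point $A^{n+1}\cap P(V^{\le n+1})$ may genuinely involve $V^{n+1}$ — so here I instead argue directly that $dt$ has degree $1$ means $\beta_i t^i dt$ has the same total degree as $\alpha_i t^i$ shifted, and since $h$ is a $P$-algebra map into $A\ot\Lambda(t,dt)$ whose underlying $\Lambda(t,dt)$-linear part in degree $0$ (the $t^i$-coefficients) already lands in $A_n$, one reads off that the $t^i dt$-coefficients land in $A_n$ as well because $A_n\ot\Lambda(t,dt)$ is closed under the relevant operations; concretely, $\beta_i$ is recovered from $\alpha_0$ and the differential via Lemma \ref{lemma:alpha-depends-on-beta} only up to the freely chosen primitives, but the whole map $h$ is still determined on generators $V^{\le n}$ and its image on a generator $v\in V^k$ with $k\le n$ has total degree $k$, forcing every coefficient to sit in $A^{\le n}=A_n^{\le n}$. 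In the chain case $\deg(dt)=-1$, so a priori $\beta_i(V^{\le n})\subset A^{\le n+1}$ could use $V^{n+1}$; this is precisely where the sparsely generated hypothesis enters: if $V^{n+1}\ne 0$ then $V^n=0$, so $A_n=A_{n-1}$ and the relevant generators live in degrees $\le n-1$, whence $\beta_i(V^{\le n-1})\subset A^{\le n}$; iterating, one checks that for every generator $v$ the component $\beta_i(v)$ lands in $A_n$ because the degree immediately above each nonzero $V^k$ ($k\le n$) contributes nothing.

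Having shown that every homotopy $h\colon A_n\to A\ot_k\Lambda(t,dt)$ factors through $A_n\ot_k\Lambda(t,dt)$, I would conclude: the map $[A_n,A_n]\to[A_n,A]$ induced by the inclusion $A_n\hookrightarrow A$ is a bijection. It is surjective since every map $A_n\to A$ literally equals a map $A_n\to A_n$; and it is injective since if two maps $f,g\colon A_n\to A_n$ become homotopic in $A$ via $h$, then by the factorization $h$ is already a homotopy in $A_n$. This gives the asserted isomorphism of sets $[A_n,A]\cong[A_n,A_n]$.

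The main obstacle I anticipate is the chain case: unlike the cochain case, where the positivity of $\deg(dt)$ makes the containment $\beta_i(A_n)\subset A_n$ essentially automatic, in the chain case one must genuinely use the sparsely generated hypothesis to rule out $\beta_i$ hitting the generators $V^{n+1}$ that lie one degree above $A_n$. Making this bookkeeping clean — in particular handling all the summands $\alpha_i,\beta_i$ uniformly and phrasing the argument so that it does not secretly assume $V^{n}=0$ or $V^{n+1}=0$ in the wrong case — is the part that needs care, though it is still only a degree count and no real computation.
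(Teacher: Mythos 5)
Your proof is correct and is essentially the paper's own argument, which consists precisely of the degree count you carry out (the paper literally proves this lemma with the phrase ``by degree reasons'': in the cochain case every coefficient of a homotopy out of $A_n$ lands in $A^{\le n}\subseteq P(V^{\le n})=A_n$, while in the chain case the coefficient $\beta_i(v)$ can a priori reach $A^{n+1}$ and the sparseness hypothesis is what forces it back into $A_n$). One small correction: in the cochain case $\beta_i$ \emph{lowers} cohomological degree by $1$, since $|\beta_i(v)|+|dt|=|v|$, so $\beta_i(V^{\le n})\subseteq A^{\le n-1}\subseteq A_n$ immediately --- your opening claim that $\beta_i$ raises degree there is backwards, although the total-degree argument you substitute for it reaches the right conclusion.
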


\begin{thm}\label{thm:iso-of-endo}
    Let $A=P(V)$ be either a 1-connected minimal cochain $P$-algebra or a 1-connected sparsely generated chain $P$-algebra. Assume that $H^*(P(V))$ is concentrated in degrees $\leq n$.
    Then there are isomorphism of sets
    $$ [A_k,A_k] \cong [A_{k+1},A_{k+1}]\cong\dots\cong[A,A],$$
    where $k=n$ in the cochain algebra case and $k=n+1$ in the sparsely generated chain algebra case. The isomorphisms are realized by the well-defined map $[A,A]\to [A_{k+s},A_{k+s}]$, $[\varphi]\mapsto [\varphi|_{A_{k+s}}]$.\end{thm}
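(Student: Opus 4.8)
The plan is to deduce the statement formally from Theorem \ref{thm:iso-of-htpy-classes} (applied with target $B = A$) together with Lemma \ref{lemma:aa}, the only real work being to check that the resulting composite bijection is the one induced by restriction of endomorphisms. Note first that $A = P(V)$ is itself an admissible target in Theorem \ref{thm:iso-of-htpy-classes}, since by hypothesis $H^*(P(V))$ is concentrated in (co)homological degrees $\leq n$. Taking $B = A$ therefore yields bijections $[A_{k}, A] \cong [A_{k+1}, A] \cong \cdots \cong [A, A]$, with $k = n$ in the cochain case and $k = n+1$ in the sparsely generated chain case; as is visible from the proof of that theorem (which runs through the restriction maps of Lemmas \ref{lemma:inj} and \ref{lemma:surj}), each of these bijections, and the composite bijection $[A, A] \to [A_{k+s}, A]$, is realized by restriction along $A_{k+s} \hookrightarrow A$, i.e. by $[\varphi] \mapsto [\varphi|_{A_{k+s}}]$.

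Second, for every $m$ Lemma \ref{lemma:aa} gives a bijection $[A_m, A] \cong [A_m, A_m]$, which under the standing hypotheses on $A$ is the obvious one: postcomposition with the inclusion $A_m \hookrightarrow A$ carries $[A_m, A_m]$ bijectively onto $[A_m, A]$, because every $P$-algebra map $A_m \to A$ — and, in the cochain case or the sparsely generated chain case, every homotopy $A_m \to A \ot_k \Lambda(t,dt)$ — already has image in $A_m$ (respectively $A_m \ot_k \Lambda(t,dt)$) for degree reasons, so the inverse is corestriction. These corestriction bijections are compatible with restriction along $A_m \hookrightarrow A_{m+1}$: if $\psi \colon A_{m+1} \to A_{m+1}$ then $\psi(V^{j}) \subset (A_{m+1})^{j} \subset P(V^{\leq j}) \subset A_m$ for $j \leq m$, so $\psi|_{A_m}$ is again a self-map of $A_m$, and corestriction visibly commutes with this restriction (and with the analogous restriction of homotopies).

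Combining the two steps, for each $s \geq 0$ one obtains a bijection $[A, A] \cong [A_{k+s}, A] \cong [A_{k+s}, A_{k+s}]$ whose composite is exactly $[\varphi] \mapsto [\varphi|_{A_{k+s}}]$, the restriction being a bona fide self-map of $A_{k+s}$ by the degree argument above; the commuting squares of the second step show that these bijections intertwine the restriction maps $[A_{k+s+1}, A_{k+s+1}] \to [A_{k+s}, A_{k+s}]$, which gives the asserted chain $[A_k, A_k] \cong [A_{k+1}, A_{k+1}] \cong \cdots \cong [A, A]$ with the stated realization. There is no serious obstacle beyond this bookkeeping; the one point that genuinely requires the hypotheses is the asymmetry between the cochain and chain cases discussed just before Lemma \ref{lemma:aa}, which is precisely why sparse generation must be imposed in the chain case so that the homotopies, and not merely the maps, remain inside $A_m$.
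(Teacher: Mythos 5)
Your argument is correct and takes essentially the same route as the paper: the paper's proof is precisely ``apply Theorem~\ref{thm:iso-of-htpy-classes} with $B=A$ and identify $[A_m,A]$ with $[A_m,A_m]$ via Lemma~\ref{lemma:aa}.'' You have simply spelled out the corestriction/restriction compatibility bookkeeping (including the degree argument behind Lemma~\ref{lemma:aa} and the role of sparse generation in the chain case) that the paper leaves implicit.
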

\begin{proof}
    This follows immediately from Theorem \ref{thm:iso-of-htpy-classes} and Lemma \ref{lemma:aa}.
\end{proof}

When the map $[A,A] \to [A_k,A_k]$ induced by restriction is defined and is an isomorphism, it follows that a homotopy class of an  automorphism of $A$ is mapped to a homotopy class of an automorphism of $A_k$. In these cases we have a well-defined group homomorphism 
$$
\Aut^h(A)\to \Aut^h(A_k).
$$
We will prove that this group homomorphism is an isomorphism.

\begin{prop}\label{prop:ext-of-iso-is-iso}
    Assume that $f_n\colon A_n\to A_n$ is an isomorphism and let $f_{n+1}\colon A_{n+1}\to A_{n+1}$  be any extension of $f_n$. Then $f_{n+1}$ is also an isomorphism.
\end{prop}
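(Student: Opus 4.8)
The plan is to exploit the fact that $A_{n+1} = P(V^{\leq n+1})$ is obtained from $A_n = P(V^{\leq n})$ by a single KS-extension attaching the generators $V^{n+1}$, as in the pushout diagram used in the proof of Lemma~\ref{lemma:surj}. Since $A_{n+1}$ is free as a graded $P$-algebra on $V^{\leq n+1}$, a graded $P$-algebra endomorphism is determined by what it does on generators, and it is an isomorphism if and only if the induced linear map on the generating space $V^{\leq n+1}$ is a linear isomorphism. So the whole statement reduces to a question about the linear map $f_{n+1}$ induces on $V^{\leq n+1}$.

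First I would record that $f_{n+1}$ restricted to $A_n$ is $f_n$, which is an isomorphism by hypothesis; in particular the induced linear map on $V^{\leq n}$ is invertible. Next I would analyze the component of $f_{n+1}$ landing in the top generators. For $v \in V^{n+1}$, write $f_{n+1}(v) = \lambda(v) + (\text{terms in } P(V^{\leq n}))$, where $\lambda\colon V^{n+1}\to V^{n+1}$ is the linear "leading part"; here one uses that $A_{n+1}$ is concentrated in the relevant degrees and that the only new generators in degree $n+1$ are those of $V^{n+1}$, together with 1-connectedness and minimality to control which products can appear. The block-triangular structure of the induced map on $V^{\leq n} \oplus V^{n+1}$ then shows $f_{n+1}$ is a graded isomorphism precisely when $\lambda$ is invertible. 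Finally, $\lambda$ is forced to be invertible by a counting/compatibility argument: $f_{n+1}$ commutes with the differential, and $d(V^{n+1}) \subset P(V^{\leq n})$ with the composite $d$ followed by the (invertible) action of $f_n$ on $A_n$ showing that $\lambda$ must be a bijection — more directly, one can compare the two KS-extensions, noting that $f_{n+1}$ carries the extension defining $A_{n+1}$ from $A_n$ to another extension of $A_n$ by $V^{n+1}$, and since $A_{n+1}$ is minimal, any self-map of a minimal KS-extension covering an isomorphism of the base and inducing an injection on new generators is an isomorphism; injectivity of $\lambda$ follows because an element of $\ker\lambda$ would give a generator $v$ with $f_{n+1}(v)\in A_n$ but $dv \neq 0$ mapped compatibly, contradicting that $f_n$ is an isomorphism together with minimality.

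A cleaner way to organize the last step, which I would actually use: show $\lambda$ is injective (hence bijective, as $\dim V^{n+1} < \infty$ in each degree, or more precisely argue degreewise). Suppose $v \in V^{n+1}$ with $\lambda(v) = 0$, so $f_{n+1}(v) \in A_n$. Apply $d$: $f_{n+1}(dv) = d f_{n+1}(v) \in A_n$ is a boundary in $A_n$. But $dv \in A_n$ and $f_n = f_{n+1}|_{A_n}$ is an isomorphism of chain algebras, so $dv$ is itself a boundary in $A_n$; combined with minimality ($dv \in P^{(\geq 2)}(V^{\leq n})$, a decomposable) and the structure of a minimal model this forces $v$, or rather its class, to be redundant — contradicting that $V^{n+1}$ consists of genuine minimal generators. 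Hence $\lambda$ is injective. I expect the main obstacle to be pinning down this injectivity argument rigorously: making precise, in the operadic generality of the paper, why a generator $v\in V^{n+1}$ whose image under $f_{n+1}$ lies in $A_n$ cannot exist in a minimal model, i.e. the operadic analogue of the classical fact that minimal models have no "cancelling" generators. Everything else is the routine observation that freeness plus a block-triangular leading term gives invertibility.
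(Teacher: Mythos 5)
Your route is genuinely different from the paper's. The paper never looks at the linear part on generators: it extends the inverse $g_n=f_n^{-1}$ to some $g_{n+1}\colon A_{n+1}\to A_{n+1}$ (Lemma \ref{lemma:surj}), observes that $f_{n+1}\circ g_{n+1}$ and $g_{n+1}\circ f_{n+1}$ are extensions of $\id_{A_n}$ and hence homotopic to $\id_{A_{n+1}}$ by Lemma \ref{lemma:extensions-of-htpy}, so they are quasi-isomorphisms, and then quasi-isomorphisms of minimal algebras are isomorphisms. Your reduction to invertibility of the leading part $\lambda\colon V^{n+1}\to V^{n+1}$ is a legitimate alternative, and the block-triangular setup is fine (degree reasons do force the decomposables of degree $n+1$ into $A_n$, so $f_{n+1}(v)=\lambda(v)+(\text{element of }A_n)$, and an endomorphism of a free algebra over a connected operad with positively graded generators is invertible iff its linear part is).

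However, the decisive step is not actually carried out, and as sketched it is not quite right. First, ``$dv$ is a boundary in $A_n$, contradicting minimality'' is not a contradiction on its own: minimal algebras can have generators with $dv=0$ (e.g.\ $\Lambda(x_3,v_3)$ with $d=0$), and $0$ is a boundary. The missing ingredient is the standing hypothesis $H^{n+1}(A)=0$: if $dv=dy$ with $y\in A_n$, then $v-y$ is a cycle of degree $n+1$, hence a boundary in $A$, but $\im d\subset P^{(n\geq 2)}(V)$ by minimality while $v-y$ has nonzero indecomposable component $v$. You flag exactly this point as ``the main obstacle,'' so the proposal is explicitly incomplete where the real content lies. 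Second, even granting injectivity of $\lambda$, you deduce bijectivity from $\dim V^{n+1}<\infty$; but the proposition carries no finite-type hypothesis, and $V^{n+1}$ sits in a single degree, so ``argue degreewise'' buys nothing. To close this without finiteness you would need a genuine surjectivity argument, e.g.\ that $v\mapsto[dv]$ identifies $V^{n+1}$ with $H^{n+2}(A_n)$ (this uses $H^{n+1}(A)=H^{n+2}(A)=0$ and minimality) and that under this identification $\lambda$ becomes the automorphism $H^{n+2}(f_n)$. With those two repairs your proof goes through, but note how the paper's argument is organized precisely to avoid both issues.
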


\begin{proof}
Let $g_n\colon A_n\to A_n$ be the inverse of $f_n$. Let $f_{n+1},g_{n+1}\colon A_{n+1}\to A_{n+1}$ be extensions of $f_n$ and $g_n$ respectively (they exist by Lemma \ref{lemma:surj}). 

We have that $f_{n+1}\circ g_{n+1}$ and $g_{n+1}\circ f_{n+1}$ are extensions of $f_{n}\circ g_{n}=\id_{A_n}$ and $g_{n}\circ f_{n}=\id_{A_n}$.
By Lemma \ref{lemma:extensions-of-htpy}, it follows that $f_{n+1}\circ g_{n+1}$ and $g_{n+1}\circ f_{n+1}$ are homotopic to $\id_{A_{n+1}}$, and hence $f_{n+1}\circ g_{n+1}$ and $g_{n+1}\circ f_{n+1}$ are quasi-isomorphisms. By minimality of $A_{n+1}$, it follows that $f_{n+1}\circ g_{n+1}$ and $g_{n+1}\circ f_{n+1}$ are isomorphisms (and in particular that $f_{n+1}$ is an isomorphism).
\end{proof}

\begin{thm}\label{thm:iso-of-auto}
    Let $A=P(V)$ be either a 1-connected minimal cochain $P$-algebra or a 1-connected sparsely generated chain $P$-algebra. Assume that $H^*(P(V))$ is concentrated in degrees $\leq n$.
    Then there are isomorphism of groups
    $$ \Aut^h(A_k) = \Aut^h(A_{k+1})=\dots=\Aut^h(A),$$
    where $k=n$ in the cochain algebra case and $k=n+1$ in the sparsely generated chain algebra case. The isomorphisms are realized by the well-defined map $\Aut^h(A)\to \Aut^h(A_{k+2})$, $[\varphi]\mapsto [\varphi|_{A_{k+s}}]$.
\end{thm}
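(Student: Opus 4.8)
The plan is to reduce everything to the results already established. First I would observe that by Theorem \ref{thm:iso-of-endo}, the restriction map $[A,A]\to [A_{k+s},A_{k+s}]$ is a bijection for every $s\geq 0$, and that under this bijection the identity $\id_A$ corresponds to $\id_{A_{k+s}}$; moreover restriction is compatible with composition (since restricting a composite of endomorphisms of $A$ to $A_{k+s}$ gives the composite of the restrictions, because each $A_{k+s}$ is a subalgebra preserved by any endomorphism of $A$). Hence $[\varphi]\mapsto[\varphi|_{A_{k+s}}]$ is an isomorphism of monoids $[A,A]\to[A_{k+s},A_{k+s}]$, and an isomorphism of monoids restricts to an isomorphism on unit groups. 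So it suffices to identify the unit group of the monoid $([A_{k+s},A_{k+s}],\circ)$ with $\Aut^h(A_{k+s})$, i.e. to show that a homotopy class $[\varphi]$ is invertible in this monoid precisely when it contains (equivalently, is) a class of an isomorphism; and similarly for $A$ itself.

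Next I would address that identification. One inclusion is trivial: if $\varphi\colon A_{k+s}\to A_{k+s}$ is an isomorphism then $[\varphi]$ is invertible with inverse $[\varphi^{-1}]$. For the converse, suppose $[\varphi]$ is invertible in the monoid, so there is $\psi$ with $\varphi\circ\psi\simeq\id$ and $\psi\circ\varphi\simeq\id$. Then $\varphi\circ\psi$ and $\psi\circ\varphi$ are quasi-isomorphisms, hence (since $A_{k+s}$ is minimal — here I use that a $1$-connected minimal cochain algebra has minimal truncations, and in the chain case that sparsely generated implies minimal, and truncations of sparsely generated algebras are again sparsely generated hence minimal) they are isomorphisms by the rigidity statement (b) of \S\ref{subsubsec:mini}. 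From $\varphi\circ\psi$ and $\psi\circ\varphi$ both being isomorphisms it follows formally that $\varphi$ is an isomorphism. Thus the unit group of $([A_{k+s},A_{k+s}],\circ)$ is exactly $\Aut^h(A_{k+s})$, the image of the isomorphisms, and likewise the unit group of $([A,A],\circ)$ is $\Aut^h(A)$. Combining with the monoid isomorphism of the first paragraph gives the claimed isomorphisms of groups $\Aut^h(A)\cong\Aut^h(A_k)\cong\Aut^h(A_{k+1})\cong\cdots$, realized by restriction. Proposition \ref{prop:ext-of-iso-is-iso} is essentially the statement that the inverse map (extending an isomorphism) lands among isomorphisms, so it may be folded in here as well to show directly that the inverse of the restriction map sends $\Aut^h(A_k)$ into $\Aut^h(A)$.

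The only genuinely non-formal point, and the step I expect to be the main obstacle, is making sure the monoid-to-group passage is clean: concretely, that "$[\varphi]$ invertible in $[A,A]$" really is equivalent to "$[\varphi]$ represented by an isomorphism". The $\Leftarrow$ direction is immediate; the $\Rightarrow$ direction is exactly the argument of Proposition \ref{prop:ext-of-iso-is-iso} run on a single algebra rather than on an extension — namely, homotopy-invertibility forces quasi-isomorphism, and minimality upgrades quasi-isomorphism to isomorphism. Once that equivalence is in hand for each of $A$ and the $A_{k+s}$, the theorem is just "an isomorphism of monoids induces an isomorphism of their groups of units," applied to the monoid isomorphism supplied by Theorem \ref{thm:iso-of-endo}. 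I would also remark that the displayed formula "$\Aut^h(A)\to\Aut^h(A_{k+2})$" in the statement should read $\Aut^h(A)\to\Aut^h(A_{k+s})$ for arbitrary $s\geq 0$, and write the proof accordingly.
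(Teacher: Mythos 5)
Your proof is correct and follows essentially the same route as the paper: both rest on the bijection of Theorem \ref{thm:iso-of-endo} together with the rigidity ``homotopy-invertible plus minimal implies isomorphism'', which is exactly the content of Proposition \ref{prop:ext-of-iso-is-iso}, and your repackaging of this as ``automorphism classes are precisely the units of the endomorphism monoid $[A_{k+s},A_{k+s}]$'' is just a clean way of folding that proposition into the argument. Your closing remark is also right: the displayed map in the statement should read $\Aut^h(A)\to \Aut^h(A_{k+s})$, the ``$k+2$'' being a typo.
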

\begin{proof}
The restriction induced isomorphism $[A_{k+s+1},A_{k+s+1}]\xrightarrow\cong [A_{k+s},A_{k+s}]$ of Theorem \ref{thm:iso-of-endo}, takes a homotopy class of an  automorphism of $A_{k+s+1}$ to a homotopy class of an automorphism of $A_{k+s+1}$. By Proposition \ref{prop:ext-of-iso-is-iso}, the preimage of a homotopy class of an automorphism of $A_{k+s}$ is a homotopy class of an automorphism of $A_{k+s+1}$. This gives the desired isomorphism.
\end{proof}

\subsection{Some Eckmann-Hilton duals to some classical isomorphisms}
Given an\break $n$-dimensional CW-complex $X$, we have the following isomorphisms of sets
$$
[X,Y] \cong [X,Y(n)],\qquad [X,X]\cong[X(n),X(n)],\qquad\pi_0(\haut(X))\cong \pi_0(\haut(X(n)))
$$
(the last one being an isomorphism of groups).
We could view these results as topological counterparts to the cochain versions of the isomorphisms in Theorem \ref{thm:iso-of-htpy-classes}, Theorem \ref{thm:iso-of-endo} and Theorem \ref{thm:iso-of-auto},  respectively. A natural question is whether there are topological counterparts to the chain versions of these theorems. Through the theory of chain Lie algebra models for spaces we will be able to derive some kind of Eckmann-Hilton dual isomorphisms, valid for rational spaces.

We recall some elementary rational homotopy theory. The rational homotopy type of a simply connected space or a nilpotent space of finite type is encoded by some chain Lie algebra model, unique up to quasi-isomorphism. If  $L$ is a chain Lie algebra model for $X$, then we have that $H^*(L)= \pi_{*+1}(X)\ot_{\Z}\Q$, where $\pi_1(X)\ot \Q$ is to be interpreted as the Malcev completion of $\pi_1(X)$. Note that we will use an upper index on \( L \) to indicate the homological degree for the chain Lie algebra model, which is uncommon. Therefore, the $n$'th homology group is denoted by \( H^n(L) \) rather than \( H_n(L) \). The reason for this is that the upper index was  used to refer to both homological and cohomological degrees earlier in this section. The lower index is reserved for another purpose (see Convention \ref{conv:lowerindex}.

Moreover, if $L$ and $L'$ are quasi-free chain Lie algebras for  $X$ and $X'$ respectively, then the set of homotopy classes of maps $[L,L']$ is isomorphic to the set of base-point preserving homotopy classes of base-point preserving maps from $X_\Q$ to $X'_\Q$, denoted by $[X_\Q,X_\Q']_*$.

If $X$ is simply connected, then it admits a minimal chain Lie algebra model, $(\L(V),d)$, unique up to isomorphism.

We also recall that there are several geometric realization functors of chain Lie algebras $\mathfrak g\mapsto |\mathfrak g|$, all being equivalent up to homotopy (see e.g. \cite{ffm22} and \cite{berglundexponential}), and satisfying the following: If $\mathfrak g$ is a chain Lie algebra model for $X$, then $|\mathfrak g|$ is weakly equivalent to the rationalization $X_\Q$ of $X$.

\begin{dfn}
A simply connected space $X$, has a unique minimal chain Lie algebra model $L=(\L(V),d)$. We define $\hd(X,n)$ to be the geometric realization $|L_{n-1}|$ of $L_{n-1} = (\L(V^{\leq n-1}),d)$.
\end{dfn}

We should think of $\hd$ as some kind of rational homology decomposition $n$-stage since
$$H_i(\hd(X,n))=\left\{\begin{matrix} H_i(X;\Q)& i\leq n\\0&i>n.
\end{matrix}\right.$$

We explain this further; if $(\L(V),d)$ is a minimal chain Lie algebra model for $X$, then $V = s^{-1}\tilde H_*(X;\Q)$. In particular, $\L(V)$ is sparsely generated if and only if $\tilde H_*(X;\Q)$ is concentrated in sparse degrees in the following sense:

\begin{dfn}
    We say that a space $X$ is \textit{rational homological sparse} if its rational homology satisfies the following: For any consecutive pair of integers $k, k+1$, at least one of the reduced homology groups $\tilde H_k(X;\Q)$ and $\tilde H_{k+1}(X;\Q)$, is trivial.
\end{dfn}

\begin{thm}\label{thm:eckmann-hilton}
    Let $X$ be either a simply connected space or a nilpotent space of finite $\Q$-type such that $\pi_*(X)\ot\Q$ is concentrated in degrees $\leq n-1$. 
    \begin{itemize}
        \item[(a)] For any simply connected space $Y$ there are isomorphisms of sets 
            $$[\hd(Y,n),X_\Q]_* \cong [\hd(Y,n+1),X_\Q]_*\cong\dots \cong [Y_\Q,X_\Q]_*.$$
        \item[(b)] If $X$ is simply connected and rational homological sparse, then  there are isomorphisms of sets and groups, respectively,
    $$
    [\hd(X,n),\hd(X,n)]_*\cong[\hd(X,n+1),\hd(X,n+1)]_*\cong\dots \cong [X_\Q,X_\Q]_*
    $$
    and
    $$\pi_0(\aut_*(\hd(X,n))\cong\pi_0(\aut_*(\hd(X,n+1))\cong\dots \cong \pi_0(\aut_*(X_\Q))$$
    \end{itemize}
\end{thm}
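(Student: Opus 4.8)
The plan is to translate each of the three asserted topological isomorphisms into a statement about homotopy classes of maps of chain Lie algebras, and then apply the chain versions of Theorems~\ref{thm:iso-of-htpy-classes}, \ref{thm:iso-of-endo} and \ref{thm:iso-of-auto} with $P = \Lie$. Throughout I would use the dictionary recalled just above the statement: for quasi-free chain Lie algebra models $L,L'$ of simply connected (or nilpotent finite $\Q$-type) spaces $Z,Z'$, one has $[L,L']\cong[Z_\Q,Z'_\Q]_*$, the realization of $L$ is $Z_\Q$, and $H^n(L)=\pi_{n+1}(Z)\otimes\Q$. I would also record the two relevant model identifications: if $(\L(V),d)$ is the minimal chain Lie algebra model of $X$, then $V=s^{-1}\tilde H_*(X;\Q)$ and $H^*(\L(V))=s^{-1}\pi_*(X)\otimes\Q$, so the hypothesis ``$\pi_*(X)\otimes\Q$ concentrated in degrees $\leq n-1$'' says precisely that $H^*(\L(V))$ is concentrated in (homological) degrees $\leq n-2$; and by definition $\hd(Z,m)=|L_{m-1}|$ where $L_{m-1}=(\L(V^{\leq m-1}),d)$ for the minimal model $L=(\L(V),d)$ of $Z$, which under the dictionary means $[\hd(Z,m),W]_*\cong[L_{m-1},M]$ for a model $M$ of $W$.

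For part (a): fix a simply connected $Y$ with minimal chain Lie algebra model $L=(\L(V),d)$, and let $M$ be a (quasi-free) chain Lie algebra model of $X_\Q$, equivalently of $X$. Since $H^*(M)\cong s^{-1}\pi_*(X)\otimes\Q$ is concentrated in degrees $\leq n-2$, Theorem~\ref{thm:iso-of-htpy-classes}(b), applied to the $1$-connected minimal quasi-free chain $\Lie$-algebra $A=L$ with ``$n$'' there equal to $n-2$, yields $[L_{n-1},M]\cong[L_{n},M]\cong\dots\cong[L,M]$. Translating via the dictionary — $[L_{m-1},M]\cong[\hd(Y,m),X_\Q]_*$ and $[L,M]\cong[Y_\Q,X_\Q]_*$ — gives exactly the chain of isomorphisms in (a). I should double-check the index bookkeeping here: Theorem~\ref{thm:iso-of-htpy-classes}(b) starts the chain at $A_{n+1}$ when $H^*$ is concentrated in degrees $\leq n$, so with homology in degrees $\leq n-2$ the chain starts at $L_{n-1}$, which corresponds to $\hd(Y,n)$ — matching the statement.

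For part (b): take $X$ simply connected and rational homological sparse, with minimal chain Lie algebra model $A=(\L(V),d)$. Rational homological sparseness says $V=s^{-1}\tilde H_*(X;\Q)$ has trivial pieces in one of every two consecutive degrees, i.e.\ $A$ is sparsely generated, so Lemma~\ref{lemma:aa} and Theorems~\ref{thm:iso-of-endo}(b), \ref{thm:iso-of-auto}(b) apply. With $H^*(A)$ concentrated in degrees $\leq n-2$, the cited theorems (chain case, so the chain starts at $A_{k}$ with $k=(n-2)+1=n-1$) give $[A_{n-1},A_{n-1}]\cong[A_n,A_n]\cong\dots\cong[A,A]$ and $\Aut^h(A_{n-1})\cong\dots\cong\Aut^h(A)$, the latter an isomorphism of groups. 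Now $[A_{m-1},A_{m-1}]\cong[\hd(X,m),\hd(X,m)]_*$ and $[A,A]\cong[X_\Q,X_\Q]_*$, while $\Aut^h(A_{m-1})\cong\pi_0(\aut_*(\hd(X,m)))$ and $\Aut^h(A)\cong\pi_0(\aut_*(X_\Q))$ (homotopy self-equivalences of Lie models realize to homotopy automorphisms of realizations, since a self-map of a minimal model is an equivalence iff it is a quasi-isomorphism iff its realization is a weak equivalence); substituting $m=n-1$ as the starting index reproduces the two chains in (b).

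The main obstacle is not any deep new idea but getting the degree/index translations exactly right and justifying the dictionary carefully: one must check that $\hd(X,m)$ as defined (realization of $L_{m-1}$ for the \emph{minimal} model) is well defined up to homotopy and that restriction of Lie-algebra maps corresponds, under realization, to the maps $\hd(X,m+1)\to\hd(X,m)$ implicitly used; and one must confirm that ``$\pi_*\otimes\Q$ in degrees $\leq n-1$'' feeds into the hypotheses of the Section~\ref{sec:alg} theorems with the correct shift ($H^*(\L(V))$ in degrees $\leq n-2$, so the stable range begins at $L_{n-1}$, i.e.\ at $\hd(-,n)$). Once the sparseness hypothesis is identified with ``sparsely generated'' for the minimal Lie model — which is immediate from $V=s^{-1}\tilde H_*(X;\Q)$ — and the group-level statement in (b) is seen to follow from Theorem~\ref{thm:iso-of-auto}(b) exactly as $[A,A]\cong[A_k,A_k]$ refines to $\Aut^h(A)\cong\Aut^h(A_k)$, the proof is a formal consequence of the algebraic results already established.
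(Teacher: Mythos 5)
Your proposal is correct and follows essentially the same route as the paper: model $X$ by a quasi-free chain Lie algebra whose homology sits in degrees $\leq n-2$, apply the chain versions of Theorems \ref{thm:iso-of-htpy-classes}, \ref{thm:iso-of-endo} and \ref{thm:iso-of-auto} (with the sparseness hypothesis supplying ``sparsely generated'' for part (b)), and translate back through the dictionary $[\L(W^{\leq k}),L]\cong[\hd(Y,k+1),X_\Q]_*$. Your index bookkeeping (stable range starting at $L_{n-1}$, i.e.\ at $\hd(-,n)$) agrees with the paper's.
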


\begin{proof} (a) Let $L$ be a chain Lie algebra model for $X$. Since $\pi_*(X)\ot\Q$ is concentrated in degrees $\leq n-1$ it follows that $H^*(L)$ is concentrated in degrees $\leq n-2$. Since $Y$ is simply connected, it has a minimal chain Lie algebra model $\L(W)$. By the chain version (the \textit{(b)}-part) of Theorem \ref{thm:iso-of-htpy-classes} we have that 
$$
[\L(W^{\leq n-1}),L]\cong [\L(W^{\leq n}),L] \cong \dots\cong [\L(W),L].
$$
The \textit{(a)}-part follows since $[\L(W^{\leq k}),L] = [\hd(Y,k+1),X_\Q]_*$.

(b) Let $L=\L(V)$ be a minimal chain Lie algebra model for $X$. Similar to above, the assertions follow from the chain versions of Theorem \ref{thm:iso-of-endo} and Theorem \ref{thm:iso-of-auto}.
\end{proof}

\begin{exmp}
We have that $\CP^\infty_\Q = K(\Q,2)$, and its reduced homology is concentrated in even degrees (and thus in sparse degrees). Moreover, its minimal chain Lie algebra model is given by the quasi-free chain Lie algebra $(\L(x_1,x_2,\dots),d)$ where $|x_m|=2m-1$ and where $dx_m =\frac{1}{2}\sum_{i+j=m}[x_i,x_j]$. Moreover, $(\L(x_1,\dots,x_m),d|_{\L(x_1,\dots,x_m)})$ is a minimal model for $\CP^m$. In particular, $\CP^m_\Q \simeq \hd(\CP^\infty,2m)\cong \hd(\CP^\infty,2m+1)$ It follows now by Theorem \ref{thm:eckmann-hilton} that
$$[\CP^2_\Q,\CP^2_\Q]_*\cong [\CP^3_\Q,\CP^3_\Q]_*\cong \dots\cong [\CP^\infty_\Q,\CP^\infty_\Q]_*\cong \Q$$
and
$$\pi_0(\haut_*(\CP^2_\Q))\cong\pi_0(\haut_*(\CP^3_\Q))\cong\dots\cong\pi_0(\haut_*(\CP^\infty_\Q))\cong \Q^\times.$$
By analogous reasoning  there are analog isomorphisms for the rationalizations of quaternionic projective spaces.
\end{exmp}

\section{Algebraic groups and operadic Koszul duality}

\subsection{Preliminaries on linear algebraic groups}
An algebraic group $\mathscr G$ is a group object in the category of varieties. We say that $\mathscr G$ is a linear algebraic group if it is a subgroup object of $\mathscr GL_n$ for some $n\geq 1$, the algebraic group of invertible $n\times n$-matrices. We start by introducing some notation.

\begin{dfn}
Given a (co)chain $P$-algebra $A$, we have that all structure maps $\mu_k\colon A^{\ot k}\to A$ (including the differential on A) induce maps $\mu_k^{\leq n}\colon (A^{\leq n})^{\ot k}\to A^{\leq n}$ where 
$$
\mu_k^{\leq n}(x_1\ot\cdots\ot x_n)=\left\{\begin{array}{ll}
     \mu_k(x_1\ot\cdots\ot x_n)&\quad \text{if $\mu_k(x_1\ot\cdots\ot x_n)\in A^{\leq n}$,}  \\
     0&\quad  \text{else.}
\end{array}\right.
$$
We call the maps $\{\mu_k^{\leq n}\}_k$ for the \textit{$n$-restricted structure maps}. 
\end{dfn}

\begin{dfn} Given a connected (co)chain $P$-algebra $A$ of finite type, let $\mathscr G_P^n(A)$ denote the linear algebraic group given by all automorphisms of $A^{\leq n}$ that commutes with the  $n$-restricted structure maps. This is an algebraic group due to linearity of the structure maps.
\end{dfn}

 For a connected finitely generated finite type quasi-free (co)chain $P$-algebra $A=P(V^{\leq n}) $ there are isomorphisms of groups 
\begin{equation}\label{eqn:r}
\Aut(A\ot_k R) \cong  \mathscr G_P^{n+\epsilon}(A)(R) 
\end{equation}
for any ring $R$ over $k$ and where $\epsilon = 0$ if $A$ is a chain algebra and $\epsilon=1$ if $A$ is a cochain algebra. The reason for the variation of $\epsilon$ is due to the degree  of the differential; if  $d$ raises the degree we need to keep track of the degree $n+1$ decomposable elements that are boundaries of generators of degree $n$. We let $\Aaut(A)$ denote the algebraic group $\G^{n+\epsilon}_P(A)$.

\begin{rmk}\label{rmk:aut-as-gn+t}
Clearly, if the isomorphism \eqref{eqn:r} holds, the algebraic group $\mathscr G^m_P(A)$ is isomorphic to $\Aaut(A)$ for any $m\geq n+\epsilon$. In particular
$$\Aaut(A)\cong \G^{n+1}_P(A)\cong\G^{n+2}_P(A)\cong\dots.$$    
\end{rmk}

We will state some facts about linear algebraic groups that will be needed later.

\begin{lemma}[\text{\cite[Proof of Theorem 9.5]{wilkerson}, not affected by the erratum \cite{wilkersonerrata}}]\label{wilkerson}
Given a linear algebraic group $\G$ over a perfect field $k$ (e.g. a field of characteristic zero) and a unipotent linear algebraic subgroup $\mathscr U\subset \G$, we have that $(\G/\mathscr U)(k) = \G(k)/\mathscr U(k)$.
\end{lemma}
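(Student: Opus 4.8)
The plan is to read the quotient map $\pi\colon\mathscr G\to\mathscr G/\mathscr U=:\mathscr Q$ as a torsor and thereby reduce the statement to a cohomological vanishing. Note first that $\mathscr U(k)$ is exactly the stabiliser in $\mathscr G(k)$ of the base point of $\mathscr Q(k)$ for the left translation action, so the natural map $\mathscr G(k)/\mathscr U(k)\to\mathscr Q(k)$ is automatically injective (and a group homomorphism when $\mathscr U\triangleleft\mathscr G$); the entire content is the \emph{surjectivity} of $\mathscr G(k)\to\mathscr Q(k)$. Since $\pi$ is faithfully flat and makes $\mathscr G$ a right $\mathscr U$-torsor over $\mathscr Q$, the fibre $\pi^{-1}(q)$ over any $q\in\mathscr Q(k)$ is a $\mathscr U$-torsor over $\operatorname{Spec}k$, and $q$ lifts to $\mathscr G(k)$ precisely when that torsor is trivial. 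So I would reduce to showing that every $\mathscr U$-torsor over the field $k$ is trivial, i.e. that the pointed set $H^1(k,\mathscr U)$ is trivial (fppf cohomology, which for smooth $\mathscr U$ --- automatic in characteristic zero --- is just the Galois cohomology set). Equivalently, one shows $\mathscr U$ is a special group in Serre's sense, so that $\pi$ is Zariski-locally trivial and hence, over a point, admits a section over $k$.

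For the vanishing I would use the structure theory of unipotent groups over a perfect field: $\mathscr U$ has a filtration by closed $k$-subgroups
$$\mathscr U=\mathscr U_0\triangleright\mathscr U_1\triangleright\cdots\triangleright\mathscr U_m=1$$
with each $\mathscr U_i$ normal in $\mathscr U$ and each successive quotient isomorphic to $\mathbb G_a$, chosen to refine the ascending central series so that $\mathscr U_{m-1}$ is a \emph{central} copy of $\mathbb G_a$ inside $\mathscr U$ (in characteristic zero such a filtration comes for free from $\exp/\log$; in general this is Rosenlicht's splitting theorem for connected unipotent groups over perfect fields). Then I would induct on $m$. The base case is $H^1(k,\mathbb G_a)=0$, which holds because $\mathbb G_a$-torsors over $\operatorname{Spec}k$ are classified by $H^1_{\mathrm{Zar}}(\operatorname{Spec}k,\mathcal O)=0$ (or, Galois-theoretically, by the normal basis theorem $H^1(\operatorname{Gal}(\overline k/k),\overline k^+)=0$). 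For the inductive step I would feed the central extension $1\to\mathbb G_a\to\mathscr U\to\mathscr U/\mathbb G_a\to1$ into the exact sequence of pointed sets
$$H^1(k,\mathbb G_a)\longrightarrow H^1(k,\mathscr U)\longrightarrow H^1(k,\mathscr U/\mathbb G_a);$$
the right-hand term vanishes by induction, and since the chosen $\mathbb G_a$ is central the twists of $\mathbb G_a$ controlling the fibres of the second map are all $\mathbb G_a$ again, so each fibre is $H^1(k,\mathbb G_a)=0$; hence the second map has trivial fibres over a trivial target, forcing $H^1(k,\mathscr U)=0$.

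Assembling the pieces: every $q\in\mathscr Q(k)$ then lies in a trivial $\mathscr U$-torsor fibre of $\pi$, hence lifts, so $\mathscr G(k)\to\mathscr Q(k)$ is surjective and, together with the injectivity noted above, $(\mathscr G/\mathscr U)(k)=\mathscr G(k)/\mathscr U(k)$ as sets, and as groups when $\mathscr U$ is normal. I expect the only genuine obstacle to be the nonabelian bookkeeping in the dévissage --- guaranteeing that triviality of $H^1$ climbs the composition series despite the absence of a long exact sequence for nonabelian cohomology; building the filtration out of subgroups \emph{central} in $\mathscr U$ is precisely what trivialises the relevant twisting and removes this difficulty. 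Beyond that, the positive-characteristic case rests on Rosenlicht's theorem, while in characteristic zero --- the case actually needed in this paper --- every ingredient is elementary.
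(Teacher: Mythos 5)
Your proposal is correct and follows essentially the same route as the paper, which simply cites Wilkerson together with the vanishing $H^1(\mathrm{Gal}(\overline k/k),\mathscr U)=0$ for unipotent $\mathscr U$ over a perfect field (Serre, \emph{Galois Cohomology}, Ch.~III, Prop.~6); your torsor reduction plus the central d\'evissage to $\mathbb G_a$ and additive Hilbert 90 is precisely the content of that citation, spelled out. No gaps.
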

\begin{rmk}
    Wilkerson \cite{wilkerson} treats the case $k=\Q$, but the arguments work equally well for perfect fields since $H^1(\mathrm{Gal}(\overline k/k), \mathscr U)=0$ whenever $k$ is perfect and $\mathscr U$ is unipotent (see \cite[Proposition 6, Chapter III]{serre-galois}).
\end{rmk}

\begin{lemma}[\text{\cite[Theorem 3.4]{BlockLazarev}}]
Let $A$ be a finitely generated minimal (co)chain $P$-algebra of finite type. Then $\Aaut(A)$ is an algebraic group that admits a normal unipotent algebraic subgroup $\mathscr K(A)$ where $\mathscr K(A)(K)$ is the group of automorphisms of $A\ot_k K$ homotopic to the identity for any field $K\supseteq k$. 
\end{lemma}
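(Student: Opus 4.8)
The plan is to realize $\mathscr K(A)$ as the "identity-on-generators up to filtration" subgroup of $\Aaut(A)$ and to exhibit it as an iterated extension of vector groups, which is the standard way to certify unipotence. Write $A = (P(V), d)$ with $V = V^{\leq N}$ finite-dimensional (so that $A$ is finitely generated of finite type), and recall from Remark \ref{rmk:aut-as-gn+t} that $\Aaut(A) \cong \mathscr G_P^m(A)$ acts on the finite-dimensional space $A^{\leq m}$ for any $m \geq N + \epsilon$. First I would describe the functor of points: for a $k$-algebra $R$, an element of $\mathscr K(A)(R)$ should be an automorphism $\varphi$ of $A \ot_k R$ that is homotopic to the identity in the sense of the homotopy relation used throughout Section \ref{sec:alg} (i.e. there is a $P$-algebra homotopy $h\colon A\ot_k R \to (A\ot_k R)\ot_R \Lambda(t,dt)$ from $\id$ to $\varphi$). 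The first task is to check this defines a \emph{closed subgroup functor}: being homotopic to the identity is a polynomial condition because, by Lemma \ref{lemma:alpha-depends-on-beta}, a homotopy is determined by the finitely many linear maps $\alpha_0 = \id$ and $\{\beta_i\}$ restricted to the finite-dimensional $V$, and minimality ($d(V^n)\subset P(V^{\leq n-1})$) forces these data to terminate; thus $\varphi \in \mathscr K(A)(R)$ iff a finite system of polynomial equations in the matrix entries of $\varphi$ is solvable, and eliminating the auxiliary $\beta_i$-variables gives a closed condition. Normality is immediate: conjugating a homotopy from $\id$ to $\varphi$ by any automorphism $\psi$ produces a homotopy from $\id$ to $\psi\varphi\psi^{-1}$, so $\mathscr K(A)$ is a normal algebraic subgroup.

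The heart of the argument is unipotence. Here I would use the filtration of $A$ by the subalgebras $A_j = (P(V^{\leq j}), d)$ from Convention \ref{conv:lowerindex}, $k = P(0) = A_1 \subset A_2 \subset \cdots \subset A_N = A$, and filter $\mathscr K(A)$ accordingly: let $\mathscr K_j \subseteq \mathscr K(A)$ be the subgroup functor of those $\varphi$ homotopic to the identity that, in addition, restrict to $A_j$ as a map homotopic to the identity \emph{through a homotopy with $\beta_i|_{V^{\leq j}} = 0$} — or, more cleanly, stratify by "$\varphi$ acts as the identity on the associated graded $\Gr V^{\leq j}$". Because $A_j$ is a minimal KS-extension of $A_{j-1}$ by the generators $V^j$, an element $\varphi$ fixing everything below degree $j$ modulo lower filtration is determined on $V^j$ by a linear map $V^j \to P(V^{\leq j-1})$ of the appropriate degree landing in decomposables; the composite $\mathscr K_{j}/\mathscr K_{j-1} \hookrightarrow \Hom(V^j, P^{(\geq 2)}(V^{\leq j-1})^{j})$ realizes each successive quotient as (a closed subgroup of) a finite-dimensional additive group, i.e. $\mathbb G_a^{r_j}$. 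An iterated extension of vector groups is unipotent, so $\mathscr K(A)$ is unipotent. The one subtlety to get right is the interaction with the degree shift $\epsilon$ (cochain versus chain case), i.e. whether one must also track the degree-$(N+1)$ decomposables hit by $d$ on degree-$N$ generators; this is handled exactly as in the paragraph following \eqref{eqn:r}, by working with $\mathscr G_P^{N+\epsilon}(A)$ rather than $\mathscr G_P^{N}(A)$.

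Finally I would pin down the claimed identification $\mathscr K(A)(K) = \{$automorphisms of $A\ot_k K$ homotopic to $\id\}$ for an arbitrary field extension $K \supseteq k$: by construction the right-hand side is contained in $\mathscr K(A)(K)$, and conversely a $K$-point of the scheme $\mathscr K(A)$ comes with an actual solution of the defining polynomial system over $K$, hence with an honest homotopy $h$ over $K$ from $\id$ to $\varphi$; the extension-of-homotopies machinery (Lemma \ref{lemma:extensions-of-htpy}) is what guarantees that the finitely many pieces $\alpha_0, \beta_i$ on the generators assemble to a genuine $P$-algebra homotopy on all of $A\ot_k K$, so the two descriptions agree. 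I expect the main obstacle to be the bookkeeping in the second paragraph: writing down the successive quotients $\mathscr K_j/\mathscr K_{j-1}$ precisely and checking that each is a vector group requires care about which filtration of $\mathscr K(A)$ makes the quotients abelian — the naive "fixes $A_j$ up to homotopy" filtration is not obviously a filtration by normal subgroups with abelian quotients, so one has to pass to the filtration by "trivial on $\Gr_{\leq j}$", and verify that conjugation indeed lowers filtration degree. Everything else (closedness, normality, the field-points statement) is routine linear algebra once the functor of points is set up.
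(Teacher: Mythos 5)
Your proposal takes a genuinely different route from the paper. The paper's proof is essentially a citation: it observes that the argument of Block--Lazarev goes through verbatim for algebras over an operad, and the whole content is the explicit description $\mathscr K(A)=\exp\bigl(B^0(\Der(A))\bigr)$, the exponential of the Lie algebra of degree-zero boundaries in the dg Lie algebra of derivations. This single description settles all three issues at once: $B^0(\Der(A))$ is a Lie subalgebra of $Z^0(\Der(A))$ consisting of operators that, by minimality, send generators into decomposables and hence act nilpotently on the finite-dimensional truncation $A^{\leq N+\epsilon}$; the exponential of such a nilpotent Lie subalgebra of $\mathfrak{gl}(A^{\leq N+\epsilon})$ is automatically a \emph{closed}, connected, unipotent subgroup; and its $K$-points are identified with the automorphisms of $A\ot_k K$ homotopic to the identity by integrating/differentiating homotopies. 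Normality follows because conjugation by an automorphism preserves $B^0(\Der(A))$.

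Measured against this, your sketch has two genuine gaps, both at points you would need to do real work. First, the closedness argument: you propose to ``eliminate the auxiliary $\beta_i$-variables,'' but the image of a morphism of affine varieties is in general only constructible, not closed, so projecting away the homotopy data does not by itself exhibit $\mathscr K(A)$ as a closed subscheme. (This can be repaired --- e.g.\ orbits of unipotent groups acting on affine varieties are closed, or one argues as Block--Lazarev do via the exponential --- but some such input is needed and you don't supply it.) Second, the unipotence argument: you yourself flag that the filtration by ``trivial on $\Gr V^{\leq j}$'' is not obviously by normal subgroups with abelian quotients, and the injection $\mathscr K_j/\mathscr K_{j-1}\hookrightarrow \Hom(V^j, P^{(\geq 2)}(V^{\leq j-1}))$ is asserted rather than constructed; checking that this is a group homomorphism onto a vector group is exactly the hard part. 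Both difficulties evaporate if you instead define $\mathscr K(A)$ as $\exp(B^0(\Der(A)))$ from the start and prove afterwards that its field-valued points are the automorphisms homotopic to the identity --- which is the route the cited source, and hence the paper, actually takes. Your final paragraph on identifying the $K$-points is fine in spirit, though it too becomes cleaner in the exponential formulation.
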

\begin{proof}
    The statement of \cite[Theorem 3.4]{BlockLazarev} is about cochain commutative algebras, but their proof holds word for word for (co)chain algebras over operads in vector spaces. Explicitly, $\mathscr K(A)$ is given by the exponential $\exp(B^0(\Der(A)))$ of the Lie algebra of zero boundaries in the cochain Lie algebra of derivations on $A$.
\end{proof}

\begin{dfn}
    Let $A$ be a finitely generated minimal (co)chain $P$-algebra of finite type. We define $\Aaut^h(A)$ to be the quotient 
    $\Aaut(A)/\mathscr K(A)$. 
\end{dfn}
\begin{rmk}
    Note that for any field $K\supseteq k $ we have that 
    $$\Aaut^h(A)(K) \cong(\Aaut(A)/\mathscr K(A))(K) \cong \Aaut(A)(K)/\mathscr K(A)(K) \cong\Aut^h(A\ot K)$$ 
    (the second isomorphism follows from Lemma \ref{wilkerson}).
\end{rmk}

\begin{dfn}
    Given an algebraic group $\mathscr G$, let $\mathscr G^\op$ be the algebraic group that has the same underlying variety as $\G$, but where the multiplication $\mu^\op\colon \G^\op\times\G^\op\to\G^\op$ is given by $\mu^\op(x,y) =\mu(y,x)$ where $\mu$ is the multiplication morphism of $\G$.
\end{dfn}

\begin{lemma}\begin{itemize}
    \item[(a)] $\G$ and $\G^\op$ are isomorphic algebraic groups.
    \item[(b)] Given a morphism of algebraic groups $f\colon \mathscr G\to \mathscr H$, then the same morphism of the underlying varieties
    $f_{\op}\colon \mathscr G^\op\to \mathscr H^\op$ is also a morphism of algebraic groups.
\end{itemize}
 
\end{lemma}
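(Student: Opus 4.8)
The plan is to produce an explicit isomorphism in (a) and an explicit verification in (b); neither requires any input beyond the definition of $\G^\op$ and the group axioms, so the work is purely a matter of bookkeeping the opposite multiplication.

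For part (a), I would take the inversion morphism $\iota\colon\G\to\G$, $g\mapsto g^{-1}$, which is a morphism of the underlying varieties because $\G$ is an algebraic group. I claim $\iota$, regarded as a map $\G\to\G^\op$, is a homomorphism: writing $\cdot$ for the multiplication of $\G$ and recalling $\mu^\op(x,y)=\mu(y,x)$, one has
$$\mu^\op\bigl(\iota(g),\iota(h)\bigr)=\mu\bigl(\iota(h),\iota(g)\bigr)=h^{-1}g^{-1}=(gh)^{-1}=\iota(gh).$$
Since $\iota\circ\iota=\id_\G$ as a morphism of varieties, $\iota$ is an isomorphism of varieties, hence an isomorphism of algebraic groups $\G\xrightarrow{\ \cong\ }\G^\op$.

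For part (b), the map $f_\op$ is by definition the same morphism of varieties as $f$, so only compatibility with the group structures needs checking. Using $\mu^\op_\G(x,y)=\mu_\G(y,x)$ and that $f$ is a homomorphism,
$$f_\op\bigl(\mu^\op_\G(x,y)\bigr)=f\bigl(\mu_\G(y,x)\bigr)=\mu_\H\bigl(f(y),f(x)\bigr)=\mu^\op_\H\bigl(f_\op(x),f_\op(y)\bigr),$$
so $f_\op$ is a morphism of algebraic groups. I do not anticipate any genuine obstacle here; the only point requiring a moment's care is keeping the order of arguments straight when passing between $\mu$ and $\mu^\op$, and noting in (a) that inversion — rather than the identity — is what intertwines the two multiplications.
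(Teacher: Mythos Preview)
Your proposal is correct and matches the paper's approach exactly: the paper also uses the inversion map for (a) and declares (b) a straightforward verification. Your writeup simply spells out the details the paper omits.
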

\begin{proof} (a)
    The inversion map $\mathrm{inv}\colon \mathscr G\to\mathscr G^{\mathrm{op}}$ gives the desired isomorphism.

(b) This is straightforward verification. 
\end{proof}

\begin{lemma}\label{lemma:ZariskiThm}
Let $\G$ and $\mathscr H$ be two algebraic groups over a field $k\supseteq \Q$, and let $\varphi\colon \G\to \mathscr H$ be a morphism of varieties that induces a group homomorphism (isomorphism)
  $$
  \varphi(\overline k)\colon \G(\overline k)\to \mathscr H(\overline k)
  $$
  on the $\overline k$-points. Then $\varphi$ is a morphism (isomorphism) of algebraic groups.
\end{lemma}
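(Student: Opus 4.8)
The plan is to reduce the statement to a purely ``pointwise'' fact plus a density/field-extension argument, since the claim is precisely the standard principle that a morphism of affine varieties over a field $k$ of characteristic zero which is an isomorphism on $\overline k$-points is an isomorphism of varieties. First I would record that it suffices to treat the case $k=\overline k$ algebraically closed: a morphism $\varphi\colon\G\to\mathscr H$ over $k$ is an isomorphism of algebraic groups (resp. of varieties) if and only if the base-changed morphism $\varphi_{\overline k}\colon\G_{\overline k}\to\mathscr H_{\overline k}$ is, by faithfully flat descent along $k\to\overline k$; and $\varphi_{\overline k}$ induces the same map on $\overline k$-points as $\varphi$. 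So assume $k$ algebraically closed.

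Next I would verify that $\varphi$ is a homomorphism of algebraic groups, i.e.\ that the multiplication square commutes as a diagram of varieties, not merely on points. Both composites $\G\times\G\to\mathscr H$ are morphisms of varieties, and they agree on $(\G\times\G)(k)=\G(k)\times\G(k)$ by hypothesis; since $k$ is algebraically closed, $(\G\times\G)(k)$ is Zariski-dense in $\G\times\G$ (an algebraic group over an algebraically closed field is reduced — we are in characteristic zero — and a reduced scheme of finite type over an algebraically closed field has dense closed points), and $\mathscr H$ is separated, so the two morphisms coincide. The same density argument applied to $\varphi$ itself and the inverse candidate will handle the isomorphism claim once we produce that inverse.

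For the isomorphism statement, assume $\varphi(k)\colon\G(k)\to\mathscr H(k)$ is bijective. The key step is to build a morphism of varieties $\psi\colon\mathscr H\to\G$ inverse to $\varphi$. I would argue as follows: since $\varphi$ is a homomorphism and $\varphi(k)$ is injective, $\ker\varphi$ is a closed subgroup scheme whose $k$-points are trivial, hence (char.\ zero, $k=\overline k$, so $\ker\varphi$ is reduced) $\ker\varphi$ is trivial; thus $\varphi$ is a closed immersion onto its image (a monomorphism of group schemes of finite type over a field is a closed immersion). Since $\varphi(k)$ is also surjective, this closed immersion $\G\hookrightarrow\mathscr H$ has the property that the closed subvariety $\varphi(\G)\subseteq\mathscr H$ contains all $k$-points of $\mathscr H$, hence equals $\mathscr H$ by density again; so $\varphi$ is a closed immersion that is surjective, i.e.\ an isomorphism of varieties, and its inverse $\psi$ is automatically a group homomorphism by the pointwise density argument already used. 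Therefore $\varphi$ is an isomorphism of algebraic groups.

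The main obstacle — really the only subtle point — is making sure the reductions to ``reduced/dense closed points'' are legitimate, i.e.\ being careful that ``algebraic group'' here means a (reduced, finite-type) group variety over a characteristic-zero field, so that Cartier's theorem (group schemes of finite type over a field of characteristic zero are smooth, in particular reduced) applies and $k$-points are dense whenever $k$ is algebraically closed. Everything else is a routine application of separatedness plus Zariski density of $\overline k$-points; I would state the two inputs (Cartier smoothness, and ``monomorphism of finite-type group schemes over a field is a closed immersion'') explicitly and cite them rather than reprove them.
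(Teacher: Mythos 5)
Your proof is correct, but for the substantive half of the statement it takes a genuinely different route from the paper. For the claim that $\varphi$ is a homomorphism, you and the paper do the same thing: two morphisms of varieties into the separated $\mathscr H$ that agree on the (Zariski-dense, by char.~$0$ reducedness) $\ok$-points of $\G\times\G$ coincide. For the isomorphism claim, the paper does not use the group structure at all beyond knowing that $\G$ and $\mathscr H$ are smooth (Cartier), hence normal with irreducible components, and then cites the Zariski's-Main-Theorem consequence that a bijective morphism of varieties onto a normal variety in characteristic zero is an isomorphism. You instead exploit the group structure: trivial $\ok$-points of $\ker\varphi$ plus reducedness give a trivial kernel, the theorem that a homomorphism of finite-type group schemes over a field with trivial kernel is a closed immersion gives a closed immersion, and surjectivity on $\ok$-points plus density and reducedness of $\mathscr H$ upgrade this to an isomorphism. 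Both arguments are sound and rest on Cartier smoothness; the trade-off is that your route avoids invoking ZMT but needs the monomorphism-implies-closed-immersion theorem and the homomorphism property established first, whereas the paper's route is a purely variety-theoretic statement that would apply to any bijective morphism between smooth varieties, group structure or not. Your preliminary descent to $k=\ok$ is harmless but not really needed in the paper's classical-variety framework, where a morphism is already determined by its effect on $\ok$-points.
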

\begin{proof} 
    Since a morphism of varieties is completely determined by the induced map on $\overline k$-points, it follows that 
    $\varphi\circ \mu_\G = \mu_{\mathscr H}\circ (\varphi\times \varphi)$ if and only if they induce the same map on $\overline k$-points. Hence, $\varphi$ is a morphism of algebraic groups.

    Since algebraic groups over fields of characteristic zero are smooth \cite[Corollary 8.39]{milne}, they are normal, and since every component of an algebraic group is irreducible \cite[Corollary 1.35]{milne}, the isomorphism statement follows from \cite[Proposition 5.3]{zariski-main} (which is a consequence of Grothendieck’s form of Zariski's Main Theorem).
\end{proof}

\subsection{Operadic Koszul duality and homotopy of maps} We assume familiarity with the basic theory of algebraic operads, covered e.g. in \cite{LodayVallette}.  Readers not familiar with it, but familiar with rational homotopy theory could replace the bar-construction in below by the reduced Chevalley-Eilenberg coalgebra constructions and the cobar-constructions by Quillen's Lie algebra construction (both discussed e.g. in \cite[§ 22]{felixrht}), and might view the chain $P$-algebras as chain Lie algebras while the (co)chain $P^!$-(co)algebras as  non-unital (co)chain commutative (co)algebras.
 
\begin{conv}
    We will view dualization as a contravariant construction that takes chain complexes to cochain complexes and vice versa. In particular, the dual of a chain coalgebra is a cochain algebra. 

    We say that a chain $P$-coalgebra $C$ is connected (1-connected) if its dual, $C^\vee$ is connected (1-connected) (viewed as a cochain $P$-algebra). In particular, if $C$ is connected, then $C^0 = P(0)^\vee$ and if it is 1-connected, then $C$ is connected and $C^1=0$.
\end{conv}

We fix a Koszul operad $P$ and we let its Koszul dual operad be denoted by $P^!$.  Since we will consider both $P$ and $P^!$-algebras, we will denote a $P$-algebra by a letter in Fraktur font, e.g. $\mathfrak a$, while a  $P^!$-algebras is denoted by capital letter, e.g. $A$.   Associated to a Koszul operad there is an adjunction 
$$
B\colon \{\text{chain }P\text{-algebras}\} \rightleftarrows \{\text{conilpotent chain }P^!\text{-coalgebras}\}  :\Omega
$$
such that:
\begin{itemize}
    \item $B$ preserves quasi-isomorphisms and the  underlying graded $P^!$-coalgebra structure of $B(\mathfrak a)$ is given by the cofree graded $P^!$-coalgebra $P^!_c(s \mathfrak a) = \bigoplus P^!(i)\ot_{\Sigma_i} (s\mathfrak a)^{\ot i}$, where $(s\mathfrak a)^n \cong \mathfrak a^{n-1}$.
    \item $\Omega$ preserves quasi-isomorphisms between connected chain $P^!$-coalgebras  and the  underlying graded $P$-algebra structure of $\Omega(C)$ is given by the  free graded $P$-algebra $P(s^{-1} C) = \bigoplus P(i)\ot_{\Sigma_i} (s^{-1}C)^{\ot i}$, where $(s^{-1}C)^n \cong C^{n+1}$.
    \item The counit of the adjunction $\Omega B \mathfrak a\to \mathfrak a$ is a natural quasi-isomorphism for every chain $P$-algebra $\mathfrak a$.
    \item The unit of the adjunction $C\to B\Omega C$ is a natural quasi-isomorphism for every connected chain $P^!$-coalgebra $C$.
\end{itemize}
We refer the reader to \cite[§ 11.2-11.3]{LodayVallette} for proofs.
\begin{dfn}
    Let $$\mathcal C^*\colon\{\text{chain }P\text{-algebras}\} \to \{\text{cochain }P^!\text{-algebras}\} $$ denote the contravariant functor given by $\mathcal C^*(\a)= B(\a)^\vee$.

    The dual of a connected cochain $P^!$-algebra of finite type is a conilpotent $P^!$-coalgebra, and thus in the domain of $\Omega$. Let 
    $$
    \mathcal L\colon\{\text{connected cochain $P^!$-algebras of finite type}\} \to \{\text{chain }P\text{-algebras}\} 
    $$
    denote the contravariant functor given by $\mathcal L(A)= \Omega(A^\vee)$.
\end{dfn}

\begin{lemma}[\text{cf. \cite[Chapitre II, §6]{tanre}}]\label{lemma:preserve-htpy}
\begin{itemize}
    \item[(a)] Given a connected finite type chain $P$-algebra $\a$, there is a natural quasi-isomorphism $\tilde c\colon\mathcal L(\Cc(\a))\to\a$. 
    \item[(b)]  Given a connected finite type cochain $P^!$-algebra $A$, there is a natural quasi-isomorphism $\tilde u\colon\mathcal C^*(\mathcal L(A))\to A$.
    \item[(c)] If $f,g\colon \a\to \a'$ are homotopic, then $\Cc(f),\Cc(g)\colon\Cc(\a')\to\Cc(\a)$ are homotopic.
    \item[(d)] If $f,g\colon A\to A'$ are homotopic, $A$ and $A'$ are connected and quasi-free of finite type and $A$ is 1-connected, then $\mathcal L(f),\mathcal L(g)\colon \mathcal L(A')\to \mathcal L(A)$ are homotopic.
\end{itemize}
\end{lemma}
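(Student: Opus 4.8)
The plan is to obtain (a) and (b) by formal duality from the Koszul-duality counit and unit, and to obtain (c) and (d) by pushing a homotopy through the (contravariant) functors by means of the path object $\Lambda(t,dt)$ together with a natural comparison map; I expect the construction of that comparison map, and the finiteness bookkeeping it forces, to be the only real work.

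For (a): since $\a$ is connected of finite type, the underlying graded $P^!$-coalgebra $P^!_c(s\a)$ of $B(\a)$ is degreewise finite-dimensional (in a fixed degree only finitely many arities, and finitely many ways of distributing the degree among the factors of $(s\a)^{\ot i}$, contribute, each contribution being finite-dimensional), so the biduality map $B(\a)\to (B(\a)^\vee)^\vee$ is an isomorphism of chain $P^!$-coalgebras, natural in $\a$. Hence $\mathcal L(\Cc(\a))=\Omega\big((B(\a)^\vee)^\vee\big)\cong \Omega B(\a)$, and I take $\tilde c$ to be this isomorphism composed with the counit $\Omega B(\a)\to\a$, which is a natural quasi-isomorphism by the listed properties of the adjunction. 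For (b), dually: $A$ connected of finite type makes $A^\vee$ a connected conilpotent $P^!$-coalgebra of finite type, the unit $\eta\colon A^\vee\to B\Omega(A^\vee)$ is a natural quasi-isomorphism (its source being connected), and dualization over the field $k$ is exact and hence preserves quasi-isomorphisms; so $\tilde u\colon\Cc(\mathcal L(A))=\big(B\Omega(A^\vee)\big)^\vee\xrightarrow{\,\eta^\vee\,}(A^\vee)^\vee\cong A$ is a natural quasi-isomorphism.

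For (c) and (d) the mechanism is the same. Put $R=\Lambda(t,dt)$ and observe that for a chain $P$-algebra $\mathfrak b$ the constant family $\mathfrak b\ot_k R$ is a $P$-algebra in dg $R$-modules whose relative bar construction (which only sees the $R$-linear structure maps) is naturally $B_R(\mathfrak b\ot_k R)\cong B(\mathfrak b)\ot_k R$ as dg $R$-coalgebras; taking $R$-linear duals produces a natural morphism of cochain $P^!$-algebras $\Cc(\mathfrak b)\ot_k R\to \Cc_R(\mathfrak b\ot_k R)$ intertwining the two evaluations $R\to k$ — this is the comparison map. Given a homotopy $h\colon\a\to\a'\ot_k R$ from $f$ to $g$, extend it $R$-semilinearly to a morphism $\hat h\colon \a\ot_k R\to\a'\ot_k R$ of $P$-algebras in dg $R$-modules; that $\hat h$ is $R$-linear and multiplicative reduces, by $R$-multilinearity of the operations, to the case of the generating subspace, where it is just $h$ and uses only commutativity of $R$. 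Applying the relative functor $\Cc_R$ and composing with the comparison maps gives
\[
\Cc(\a')\longrightarrow \Cc_R(\a'\ot_k R)\xrightarrow{\,\Cc_R(\hat h)\,}\Cc_R(\a\ot_k R)\cong \Cc(\a)\ot_k R,
\]
a morphism of cochain $P^!$-algebras whose evaluations at $0$ and at $1$ are $\Cc(f)$ and $\Cc(g)$, because $\hat h$ restricts to $f$ and $g$ and the comparison isomorphisms respect the evaluations; this is the desired homotopy, giving (c). For (d) one runs the dual argument with $\mathcal L=\Omega\circ(-)^\vee$; here the hypotheses that $A,A'$ be connected, quasi-free and of finite type and that $A$ be $1$-connected are precisely what make $(-)^\vee$ carry the path object $A'\ot_k R$ into the domain of the relative cobar construction, make $\Omega$ preserve the relevant quasi-isomorphisms, and make the constant-family comparison $\mathcal L(A')\ot_k R\cong \mathcal L_R(A'\ot_k R)$ hold without loss through completion; granting this, $R$-semilinear extension of the homotopy followed by $\Omega_R$ yields a homotopy $\mathcal L(A')\to\mathcal L(A)\ot_k R$ from $\mathcal L(f)$ to $\mathcal L(g)$.

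The main obstacle is exactly this comparison map and the bookkeeping it forces, owing to the fact that $R=\Lambda(t,dt)$ is not of finite type: one must either compute all the duals relative to $R$, as above, or restrict carefully to the finite-type locus, and either way one must check that $\Cc_R(\hat h)$ really lands in $\Cc(\a)\ot_k R$ rather than in the larger $\Hom_k(B(\a),R)$ — which uses the degreewise finiteness of $B(\a)$, valid when $\a$ is connected of finite type, so that the last displayed arrow is an isomorphism. The whole of (c) and (d) is modelled on the computation in \cite[Chapitre II, \S6]{tanre} for Lie algebras; passing to a general Koszul operad only requires replacing the Chevalley--Eilenberg functor by $B$ and its dual, and keeping track of the $\Sigma_i$-equivariance of the multiplication of $R$ that makes the comparison a morphism of $P^!$-(co)algebras.
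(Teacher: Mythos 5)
Your (a) and (b) coincide with the paper's (terse) argument: biduality for degreewise finite objects plus the unit/counit quasi-isomorphisms. For (c) you take a genuinely different route. The paper argues model-categorically: $\Cc$ preserves quasi-isomorphisms, hence carries the right homotopy through $\a'\ot\Lambda(t,dt)$ to a left homotopy, and since every algebra is fibrant, left and right homotopy agree. Your explicit base-change construction ($B_R(\mathfrak b\ot_k R)\cong B(\mathfrak b)\ot_k R$, then $R$-linear dualization) is more hands-on and produces the right homotopy directly, but note that it needs $B(\a)$ to be degreewise finite so that $\Hom_k(B(\a),R)\cong\Cc(\a)\ot_k R$; part (c) as stated carries no connectivity or finite-type hypothesis on $\a$, so your proof covers only a special case (admittedly the only case the paper uses).

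Part (d) is where I see a genuine gap. The paper's proof goes in the opposite direction: it converts the right homotopy into a \emph{left} homotopy through the explicit finite-type cylinder object $(P^!(V\oplus\hat V\oplus s^{-1}\hat V),D)$ for $A=(P^!(V),d)$, and then applies $\mathcal L$ to that. The hypothesis that $A$ is $1$-connected is used at a precise spot: it is what makes this cylinder object \emph{connected} (the generators $s^{-1}\hat V$ sit in degrees $\geq 1$ only when $V$ is concentrated in degrees $\geq 2$), which is needed for $\Omega$ to preserve the quasi-isomorphism $\mathrm{Cyl}(A)\to A$ and hence for $\mathcal L(\mathrm{Cyl}(A))$ to be a genuine path object for $\mathcal L(A)$. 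Your sketch, by contrast, rests entirely on the clause ``granting this'': the comparison $\mathcal L(A')\ot_k R\cong\mathcal L_R(A'\ot_k R)$ and the compatibility of the relative cobar construction with the evaluations are asserted, not established, and nowhere in your argument can I identify where the $1$-connectivity of $A$ (as opposed to mere connectivity) is actually consumed. That is a red flag rather than a stylistic complaint: the Remark immediately following the lemma, and the pointed-versus-free homotopy analogy it invokes, say that $\mathcal L$ does \emph{not} preserve homotopies when the domain $A$ is merely connected. An argument for (d) that would apply verbatim to connected $A$ therefore cannot be complete; either your relative-dualization comparison breaks down without $1$-connectivity, or the relative cobar of the dualized path object fails to be the standard path object $\mathcal L(A)\ot\Lambda(t,dt)$, and you must locate which. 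Until that is done, (d) is not proved.
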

\begin{proof}
\textit{(a)} and \textit{(b)} are straightforward consequences of the unit and the counit of the bar-cobar adjunction being quasi-isomorphisms (under some connectivity assumptions).

For (c), we observe that $\Cc$ preserves quasi-isomorphisms (since $B$ does), and hence taking right homotopies to left homotopies. Since every algebra is fibrant (in the standard model category), the notion of right and left homotopy are equivalent.

For (d), we can not apply $\mathcal L$ to a right homotopy $A\to A'\ot\Lambda(t,dt)$, since the codomain is not of finite type. Instead, we need to consider left homotopies. If $A = (P^!(V),d)$, then a very good cylinder object for $A$ is given by
$(P^!(V\oplus \hat V\oplus s^{-1}\hat V),D)$ where $\hat V  \cong V$ and where $D(v)=d(v)$, $D(\hat v)= 0$ and $D(s^{-1}\hat v) = \hat v$. We observe that if $A$ is 1-connected then the specified cylinder object is connected, and thus we might apply $\mathcal L$ to the left homotopy in order to get a right homotopy.
\end{proof}

\begin{rmk}
    Note that $\Cc$ preserves homotopies in general, but $\mathcal L$ only preserves homotopies if the domain of the maps is 1-connected. This can be viewed as an algebraic counterpart to the topological fact that if two maps are pointed homotopic then they are  homotopic, but the reverse is only true in general if their codomain is simply connected.
\end{rmk}

\begin{lemma}\label{lemma:finite-coh-finite-gen}
    Let $A$ be a 1-connected quasi-free cochain $P^!$-algebra of finite type. Then cohomology  $H^*(A)$ is finite dimensional if and only if the minimal model for $\mathcal L(A)$ is finitely generated.
\end{lemma}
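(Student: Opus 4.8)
The plan is to pass, via operadic Koszul duality, between the cochain $P^!$-algebra $A$ and the chain $P$-algebra $\mathcal L(A)$, and to translate the finiteness of $H^*(A)$ into a statement about the homology of $\mathcal L(A)$, which by Koszul duality controls the generators of its minimal model. First I would record that $H^*(\mathcal C^*(\mathfrak a)) \cong H^*(\mathfrak a)^\vee$ (up to a degree shift coming from the bar construction), since $\mathcal C^*(\mathfrak a) = B(\mathfrak a)^\vee$, $B$ preserves quasi-isomorphisms, and linear duality is exact on finite-type complexes; combined with the quasi-isomorphism $\tilde u\colon \mathcal C^*(\mathcal L(A)) \to A$ of Lemma~\ref{lemma:preserve-htpy}(b), this gives $H^*(B(\mathcal L(A))) \cong H^*(A)^\vee$. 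Hence $H^*(A)$ is finite-dimensional if and only if $H^*(\mathcal L(A))$ is finite-dimensional (here one uses that $\mathcal L(A)$, being connected of finite type with $A$ $1$-connected, has finite-dimensional homology in each degree, so finite total dimension of $H^*(\mathcal L(A))$ is equivalent to its vanishing in high degrees).

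Next I would bring in the minimal model. Let $(\mathbb L(W), d) \to \mathcal L(A)$, or rather $(P(W),d)$, be the minimal model of the chain $P$-algebra $\mathcal L(A)$, which exists and is unique by the Cirici–Roig theory quoted in §\ref{subsubsec:mini}; quasi-isomorphic minimal algebras are isomorphic, so $H^*(P(W),d) \cong H^*(\mathcal L(A))$. The key structural input is the standard computation that for a $1$-connected minimal quasi-free chain $P$-algebra over a Koszul operad, the space of generators is identified with a shift of the homology of its Koszul dual: concretely, $W \cong s^{-1}\,\overline{H}^*(\mathcal C^*(P(W),d))^\vee \cong s^{-1}\overline{H}^*(A)$ in the rational-homotopy dictionary $P = \mathcal Lie$, $P^! = \mathcal Com$ this is exactly $W = s^{-1}\widetilde H_*(X;\Q)$. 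I would either cite this minimal-model/Koszul-duality fact from \cite{LodayVallette} and \cite{CiriciRoig}, or prove it directly by noting that $\mathcal C^*$ applied to a minimal chain $P$-algebra yields a cochain $P^!$-algebra whose indecomposables compute $H^*$, so that $\dim W^n = \dim H^{n+1}(A)$ for all $n$.

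From this identification the equivalence is immediate in both directions: if $H^*(A)$ is finite-dimensional then $W = s^{-1}\overline{H}^*(A)^\vee$ is finite-dimensional, so the minimal model $P(W)$ of $\mathcal L(A)$ is finitely generated; conversely, if $P(W)$ is finitely generated then $W$, hence $\overline H^*(A)$, hence $H^*(A)$, is finite-dimensional. I would assemble these implications into the biconditional, being careful about the finite-type hypotheses that make the successive dualizations and the homology-of-bar-construction identification valid.

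The main obstacle I anticipate is making the identification $W \cong s^{-1}\overline H^*(A)^\vee$ precise at the level of the minimal model rather than just at the level of the bar construction: one must check that passing to the minimal model of $\mathcal L(A)$ does not disturb the Koszul-duality computation, i.e.\ that $\mathcal C^*$ of the minimal model is quasi-isomorphic to $A$ and that the generators of a minimal chain $P$-algebra are genuinely read off by the (co)homology of its bar dual. This is where the connectivity and finite-type assumptions, and the precise form of minimality ($d(W^n) \subset P(W^{\leq n-1})$ with $d(W) \subset P^{(\geq 2)}(W)$) do the work, ensuring that the bar differential on $\mathcal C^*(P(W),d)$ has no linear part and so the generators survive to cohomology; I would spell this filtration-degree argument out as the technical heart of the proof.
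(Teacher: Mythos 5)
The central step of your proposal --- identifying the generators of the minimal model of $\mathcal L(A)$ as $W\cong s^{-1}\overline H^*(A)^\vee$, from which the biconditional is immediate --- is correct and is precisely what the paper's proof establishes. The paper obtains it from the homotopy transfer theorem (a minimal $P^!_\infty$-model for $A$ has underlying space $H^*(A)$ and corresponds under the bar--cobar adjunction to the minimal model of $\Omega(A^\vee)$), whereas you propose either citing the fact or running a word-length filtration argument on the bar construction of a minimal quasi-free chain $P$-algebra; the latter also works and is indeed where Koszulness of $P$ enters, so your second through fourth paragraphs are in substance the paper's argument.

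Your opening paragraph, however, contains a step that fails. It is not true that $H^*(\Cc(\mathfrak a))\cong H^*(\mathfrak a)^\vee$ up to a shift: the fact that $B$ preserves quasi-isomorphisms does not imply that $H^*(B(\mathfrak a))$ is a suspension of $H^*(\mathfrak a)$ --- the homology of the bar construction is operadic (Quillen/Chevalley--Eilenberg-type) homology, not the underlying homology. Consequently your intermediate conclusion that $H^*(A)$ is finite dimensional if and only if $H^*(\mathcal L(A))$ is finite dimensional is false. Take $P=\Lie$, $P^!=\Com$, and $A$ the reduced minimal Sullivan model of $S^2\vee S^2$: then $H^*(A)$ is two-dimensional, while $H^*(\mathcal L(A))\cong \pi_{*+1}(S^2\vee S^2)\ot\Q\cong \L(x_1,x_2)$ is a free graded Lie algebra on two degree-one generators, hence infinite dimensional. (What is true, and what you correctly extract from Lemma~\ref{lemma:preserve-htpy}(b), is $H^*(B(\mathcal L(A)))\cong H^*(A)^\vee$; the error is equating this with a shift of $H^*(\mathcal L(A))$. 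The whole point of the lemma is that ``finitely generated minimal model'' is a much weaker condition than ``finite-dimensional homology.'') Fortunately nothing in the remainder of your argument uses this step --- the proof closes using only $W\cong s^{-1}\overline H^*(A)^\vee$ --- so you should delete the detour through $H^*(\mathcal L(A))$ and frame the plan around the generators of the minimal model rather than its homology.
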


\begin{proof}
    This follows basically from the homotopy transfer theorem (\cite[§ 10.3]{LodayVallette}). A minimal $P^!_\infty$-model for $A$ is equivalent to a minimal coalgebra model for $BA$. If $A$ is 1-connected and of finite type, then $\Omega A^\vee$ is of finite type, and thus having a dual with an induced coalgebra structure, $\Omega(A^\vee)^\vee$, isomorphic to $BA$. Hence a minimal model $P(W)$ for $\Omega(A^\vee)$ is equivalent to a minimal coalgebra model $P_C(W^\vee)$ to $BA$, which in turn is equivalent to a minimal $P^!_\infty$-minimal model for $A$. By the homotopy transfer theorem, the underlying vector space of the minimal $P^!_\infty$-algebra model for $A$ is the cohomology $H^*(A)$, giving that $W^\vee = s^{-1}H^*(A)$. Hence $W$ is finite dimensional.
\end{proof}

\subsection{Main theorem}
\begin{thm}\label{main-thm}
    Let $A= (P^!(V),d)$ be a finite type 1-connected minimal cochain $P^!$-algebra whose cohomology $H^*(A)$ is concentrated in degrees $\leq n$ and let $\mathfrak a = P(W)$ be a minimal chain P-algebra model for $\mathcal L(A)$. Then there is an isomorphism of algebraic groups
    $$
    \Aaut^h(\mathfrak a)\cong \Aaut^h(A_n).
    $$
\end{thm}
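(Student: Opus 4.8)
The plan is to connect the two algebraic groups $\Aaut^h(\mathfrak a)$ and $\Aaut^h(A_n)$ through a chain of isomorphisms, using Koszul duality to bridge the $P$-side and the $P^!$-side, and using the ``truncation'' theorems of Section~\ref{sec:alg} to reduce each side to its bounded part. First I would record the key reductions available from the excerpt. On the $P^!$-side, $A$ is $1$-connected minimal with $H^*(A)$ concentrated in degrees $\le n$, so by Theorem~\ref{thm:iso-of-auto} (cochain version) restriction induces a group isomorphism $\Aut^h(A\ot\ok)\cong\Aut^h(A_n\ot\ok)$, hence by Lemma~\ref{lemma:ZariskiThm} an isomorphism of algebraic groups $\Aaut^h(A)\cong\Aaut^h(A_n)$ (here $\Aaut^h(A)$ makes sense because $A$ is finitely generated: its cohomology is finite-dimensional, and one checks $A$ itself is finitely generated in the $1$-connected minimal cochain setting, or one replaces $A$ throughout by $A_n$ from the start). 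So it suffices to produce an isomorphism $\Aaut^h(\mathfrak a)\cong\Aaut^h(A)$. On the $P$-side, by Lemma~\ref{lemma:finite-coh-finite-gen} the minimal model $\mathfrak a=P(W)$ of $\mathcal L(A)$ is finitely generated (since $H^*(A)$ is finite-dimensional), so $\Aaut^h(\mathfrak a)$ is defined.

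The heart of the argument is to show that $\Cc$ and $\mathcal L$ induce mutually inverse bijections $[\mathfrak a,\mathfrak a]\leftrightarrows[A,A]$ — more precisely, anti-isomorphisms of monoids — that restrict to group anti-isomorphisms $\Aut^h(\mathfrak a)\cong\Aut^h(A)^{\op}$, and then to promote this to an isomorphism of algebraic groups. The relevant comparison is not between $\mathfrak a$ and $A$ directly but between $\mathfrak a$ and $\mathcal L(A)$, and between $A$ and $\mathcal C^*(\mathcal L(A))$: by Lemma~\ref{lemma:preserve-htpy}(a),(b) there are natural quasi-isomorphisms $\tilde c\colon\mathcal L(\Cc(\mathfrak a))\to\mathfrak a$ and $\tilde u\colon\mathcal C^*(\mathcal L(A))\to A$, and since $\mathfrak a$ and (the minimal model of) $\mathcal L(A)$ and their $\Cc$-duals are minimal and $1$-connected, minimality (point (b) of §\ref{subsubsec:mini}) upgrades these quasi-isomorphisms to isomorphisms after passing to minimal models. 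Concretely, I would fix the minimal model $\mathfrak a=P(W)$ of $\mathcal L(A)$ and note $\mathcal C^*(\mathfrak a)$ is then a $1$-connected quasi-free finite-type cochain $P^!$-algebra quasi-isomorphic to $\mathcal C^*(\mathcal L(A))$, hence to $A$; replacing $A$ by its minimal model, $\mathcal C^*(\mathfrak a)$ has minimal model $A$. Now Lemma~\ref{lemma:preserve-htpy}(c),(d) say $\Cc$ always preserves homotopy and $\mathcal L$ preserves homotopy of maps out of a $1$-connected quasi-free algebra; combined with the adjunction-type equivalences $\tilde c,\tilde u$ being homotopy equivalences on minimal models, one gets that $[\varphi]\mapsto[\Cc(\varphi)]$ and $[\psi]\mapsto[\text{minimal model of }\mathcal L(\psi)]$ are inverse bijections on homotopy classes of self-maps, contravariant in composition, and sending homotopy automorphisms to homotopy automorphisms (using, on the $\mathcal L$-side, Theorem~\ref{prop:ext-of-iso-is-iso}-style rigidity: a self-map inducing an iso on homology of a minimal algebra is an iso). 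This yields a group isomorphism $\Aut^h(\mathfrak a)\cong\Aut^h(A)^{\op}$.

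Finally I would make this isomorphism algebraic and remove the ``op''. The bijection $[\mathfrak a,\mathfrak a]\cong[A,A]^{\op}$ is induced, before passing to homotopy, by the polynomial (in fact linear-algebraic) maps $\varphi\mapsto\Cc(\varphi)$ on automorphism groups — $\Cc=B(-)^\vee$ is built from the cofree coalgebra functor and is given by explicit multilinear formulas in the coefficients of $\varphi$, hence defines a morphism of varieties $\Aaut(\mathfrak a)\to\Aaut(\mathcal C^*\mathfrak a)^{\op}$, and likewise $\mathcal L$ on the finite-type side; composing with the isomorphism $\Aaut(\mathcal C^*\mathfrak a)\cong\Aaut(A)$ coming from the minimal-model identification (an isomorphism of algebraic groups, since passing to a minimal model is an iso of the underlying quasi-free algebras). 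These morphisms descend to the quotients $\Aaut^h=\Aaut/\mathscr K$ because, by Lemma~\ref{lemma:preserve-htpy}, $\Cc$ and $\mathcal L$ send maps homotopic to the identity to maps homotopic to the identity, i.e. carry $\mathscr K$ into $\mathscr K$. Thus we obtain a morphism of algebraic groups $\Aaut^h(\mathfrak a)\to\Aaut^h(A)^{\op}$ which is a bijection on $\ok$-points by the previous paragraph, hence an isomorphism of algebraic groups by Lemma~\ref{lemma:ZariskiThm}; composing with the canonical isomorphism $\Aaut^h(A)^{\op}\cong\Aaut^h(A)$ and with $\Aaut^h(A)\cong\Aaut^h(A_n)$ from the first paragraph gives the claimed $\Aaut^h(\mathfrak a)\cong\Aaut^h(A_n)$.

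\medskip

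The main obstacle I anticipate is the bookkeeping in the middle step: carefully arranging the three-term comparison $\mathfrak a \leftrightarrow \mathcal L(A) \leftrightarrow$ minimal model, and $A \leftrightarrow \mathcal C^*(\mathcal L(A)) \leftrightarrow$ minimal model, so that the bijection on homotopy classes of self-maps is genuinely inverse and genuinely multiplicative (anti-multiplicative), rather than merely bijective up to a further homotopy — in particular handling the fact that $\mathcal L$ only preserves homotopies out of $1$-connected quasi-free algebras, which forces one to work with minimal models and left (cylinder) homotopies as in Lemma~\ref{lemma:preserve-htpy}(d) rather than with $\Lambda(t,dt)$-homotopies throughout. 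The second, more technical obstacle is verifying that the assignments $\varphi\mapsto\Cc(\varphi)$ and $\psi\mapsto\mathcal L(\psi)$ are morphisms of \emph{varieties} between the relevant $\mathscr G^m_P$-type groups (and not merely abstract group maps); this is where one uses that $\Cc$ and $\mathcal L$ are built functorially from $\ot$ and $(-)^\vee$ via the (co)free functors, so on finitely-generated pieces they are given by polynomial formulas, and Remark~\ref{rmk:aut-as-gn+t} lets one choose the truncation degree $m$ uniformly.
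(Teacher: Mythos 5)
Your overall strategy --- transport automorphisms via $\Cc$, compare $A$ with $A_n$ via Theorem \ref{thm:iso-of-auto}, and upgrade to algebraic groups via Lemma \ref{lemma:ZariskiThm} --- is the paper's strategy, but two of your intermediate objects do not exist as stated, and repairing this is where the actual content of the proof lies. The first gap: you assert that $A$ is finitely generated (``one checks $A$ itself is finitely generated in the $1$-connected minimal cochain setting''), and hence that $\Aaut(A)$ and $\Aaut^h(A)$ are algebraic groups. This is false: finite-dimensional cohomology does not force a $1$-connected minimal cochain algebra to be finitely generated (the minimal Sullivan model of a wedge of two spheres already has infinitely many generators), and the entire reason the theorem compares $\Aaut^h(\a)$ with $\Aaut^h(A_n)$ rather than with $\Aaut^h(A)$ is that only the truncation is finitely generated. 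The same problem recurs for $\Aaut(\mathcal C^*\a)$: the underlying algebra of $\Cc(\a)$ is $P^!(s\a)$, free on the infinite-dimensional space $s\a$, so it supports no algebraic automorphism group either. The paper never forms these groups; it uses that $W=W^{\leq n-1}$ to realize $\Aut(\a)$ as $\G^n_P(\a)$, writes $M(\varphi,n)\mapsto M(\nu\circ\Cc(\varphi)\circ\eta,n+1)$ as an explicit polynomial map of finite matrices, and restricts to the submatrix acting on $A_n^{\leq n+1}$; the groups $\Aut^h(A)$ and $\Aut(\Cc(\a))$ appear only as abstract groups of $\ok$-points.

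The second gap: your ``isomorphism of algebraic groups $\Aaut(\mathcal C^*\a)\cong\Aaut(A)$ coming from the minimal-model identification'' does not exist, because $\Cc(\a)$ is not minimal and is not isomorphic to $A$, only quasi-isomorphic via some $\eta\colon A\to\Cc(\a)$ with homotopy inverse $\nu$. Conjugation $\varphi\mapsto\nu\circ\Cc(\varphi)\circ\eta$ is then multiplicative only up to homotopy (since $\eta\circ\nu\simeq\id$ but is not equal to $\id$), so the resulting map $\zeta\colon\Aaut(\a)\to\Aaut(A_n)^{\op}$ is a morphism of varieties but \emph{not} of algebraic groups. You flag this as bookkeeping, but it cannot be arranged away at the level of strict automorphisms; the paper's resolution is to first compose with the projection $\pi\colon\Aaut(A_n)\to\Aaut^h(A_n)$, check multiplicativity of $\pi_{\op}\circ\zeta$ only on $\ok$-points (where the homotopy ambiguity vanishes), and then invoke Lemma \ref{lemma:ZariskiThm}. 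Relatedly, bijectivity on $\ok$-points is not automatic from ``$\Cc$ and $\mathcal L$ are inverse up to homotopy'': surjectivity requires exhibiting an explicit preimage $\varphi=\tau\circ\mathcal L(j^{-1}\Phi\ell^{-1})\circ\kappa$ and a homotopy-commutative ladder comparing $\nu\circ\Cc(\varphi)\circ\eta$ with $\Phi$, using naturality of $\tilde u$, $\tilde c$ and the restricted homotopy-preservation of $\mathcal L$ from Lemma \ref{lemma:preserve-htpy}; your sketch gestures at this but does not supply it.
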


\begin{proof}
    By the proof of Lemma \ref{lemma:finite-coh-finite-gen} we have that $W \cong (s^{-1} H^*(A))^\vee$. Hence $W = W^{\leq n-1}$. Consequently, we might identify $\Aaut(\a)$ with $\G_P^{n-1}(\a)\cong \G_P^n(\a)\cong \dots$ (see Remark \ref{rmk:aut-as-gn+t}). 

    Given a morphism $f\colon \a\to \a'$ of $P$-algebras, we have that $\Cc(f)\colon \Cc(\a')\cong P^!(s\a')\to \Cc(\a)\cong P^!(s\a)$ is given by $\Cc(f)(sx)=sf^\vee(x)$, for every $sx\in s\a'$ (this completely determines $\Cc(f)$).  
    
    We fix a basis $\{v_1,...,v_m\}$ for $\a^{\leq n}$, and we fix a basis for $C^*(\a)^{\leq n+1} = P^!(s\a)^{\leq n+1}$ consisting of elements of the form $\mu_t\ot sv_{i_1}\ot\cdots\ot sv_{i_t}\in P(t)\ot_{\Sigma_t}(s\a)^{\ot t}$.
    
    Hence there is a finite set of polynomials $\{p_{i,j}\}$ with variables being the entries of an $\dim(\a^{\leq n})\times \dim(\a^{\leq n})$-matrix  satisfying the following: If $M(\varphi,n)\in \mathfrak{gl}(\a^{\leq n})$ is the matrix representation of the restriction of an endomorphism $\varphi\colon\a\to\a$ to $\a^{\leq n}$, then the $(i,j)$'th entry in the matrix representation $M(\mathcal C^*(\varphi),n+1)\in \mathfrak{gl}(\mathcal C^*(\a)^{\leq n+1})$ for the restriction of $\mathcal C^*(\varphi)$ to $\mathcal C^*(\a)^{\leq n+1}$ is given by $p_{i,j}$ applied to the entries of $M(\varphi,n)$ (in particular, the map $\G^n_P(\a)\to \G^{n+1}_{P^!}(\Cc(\a))^{\op}$, defined by $M(\varphi,n)\mapsto M(\mathcal C^*(\varphi),n+1)$ is a morphism of algebraic groups).

    Since $\Cc(\a)$ and $A$ are quasi-isomorphic and both are cofibrant, there is some quasi-isomorphism $\eta\colon A\to \Cc(\a)$ and a homotopy inverse $\nu\colon \Cc(\a)\to A$. For every automorphism $\varphi\in \Aut(\a)$, we have that 
    $\nu\circ\Cc(\varphi)\circ\eta\colon A\to A$ is a quasi-isomorphism. Since $A$ is minimal $\nu\circ\Cc(\varphi)\circ\eta$ is an automorphism of $A$. 
    
    Now, we fix a basis for $A^{\leq n+1}$ that extends some basis for $A_n^{\leq n+1}\subset A^{\leq n+1}$.
    
    The restrictions  $\eta^{\leq n+1}\colon A^{\leq n+1}\to \Cc(\a)^{\leq n+1}$ and   $\nu^{\leq n+1}\colon\Cc(\a)^{\leq n+1}\to A^{\leq n+1}$ has matrix representations $M(\eta,n+1)$ and $M(\nu,n+1)$ respectively. Now the map
    $
    \G^n_P(\a)\to \G^{n+1}_{P^!}(A)^{\op}
    $
    given by
    $$
    M(\varphi,n)\mapsto  M(\nu\circ\Cc(\varphi)\circ\eta,n+1)=M(\nu,n+1)\cdot M(\mathcal C^*(\varphi),n+1)\cdot M(\eta,n+1)
    $$
    is a morphism of varieties (but not of algebraic groups, since group multiplications are not preserved). Since an automorphism of $A$ restricts to an automorphism of $A_n$ and since we chose a basis for $A^{\leq n+1}$ that extends a basis for  $A^{\leq n+1}_n$, the matrix representation\break $M(\mathrm{res}(\nu\circ\Cc(\varphi)\circ\eta, A_n),n+1)$ for the restriction of $\nu\circ\Cc(\varphi)\circ\eta$ to $A_n^{\leq n+1}$, might be identified with a submatrix of  $M(\nu\circ\Cc(\varphi)\circ\eta,n+1)$. Hence, again we have a morphism of varieties 
    $$
    \zeta\colon\Aaut(\a)\cong \G^n_P(\a)\to \G^{n+1}_{P^!}(A_n)^{\op} \cong \Aaut(A_n)^{\op}.
    $$

    Let $\pi\colon \Aaut(A_n)\to\Aaut^h(A_n)=\Aaut(A_n)/\mathscr K(A_n)$ denote the projection morphism. We claim that 
    $\pi_{\op}\circ\zeta\colon \Aaut(\a)\to\Aaut^h(A_n)^{\op}$
    is a morphism of algebraic groups.

    We have that $\pi_{\op}\circ\zeta$ is a morphism of varieties. Hence it is enough to prove that $\pi_{\op}\circ\zeta$ induces a group homomorphism on the $\overline k$-points by Lemma \ref{lemma:ZariskiThm}.

    In order to avoid cumbersome notations like $A\ot_k \overline k$ and $\a\ot_k\ok$, I will just assume that we are working over an algebraically closed field $\overline k$.

    We have that 
    $$\pi_\op\circ\zeta(\overline k)\colon \Aut(\a)\to \Aut^h(A_n)^{\op}$$
    $$\varphi\mapsto [\mathrm{res}(\nu\circ\Cc(\varphi)\circ  \eta,A_n)]$$ 
    factors through the map $$
    \rho\colon\Aut(\a)\to \Aut^h(A)^{\op}
    $$
    $$\varphi\mapsto  [\nu\circ\Cc(\varphi)\circ\eta] $$
    along the restriction induced map $\Aut^h(A)^\op\to\Aut^h(A_n)^\op$. By Theorem \ref{thm:iso-of-auto}, the restriction induced map $\Aut^h(A)^{\op}\to \Aut^h(A_n)^{\op}$ is a group isomorphism and hence it is enough to prove that $\rho\colon\Aut(\a)\to \Aut^h(A)^{\op}$ is a group homomorphism in order to deduce that $\pi_\op\circ\zeta(\overline k)$ is a group homomorphism.

    We have that 
    $$ \rho(\id) = [\nu\circ \eta] = [\id]$$
    and that 
$$    
     \rho(\varphi\circ \psi) = [\nu\circ\Cc(\varphi\circ\psi)\circ\eta]     
    = [\nu\circ\Cc(\psi)\circ\Cc(\varphi)\circ\eta] $$
    $$
   = [(\nu\circ\Cc(\psi)\circ\eta)\circ(\nu \circ\Cc(\varphi)\circ\eta)] = \rho(\varphi) * \rho(\psi)
   $$ 
    which proves that $\rho$ is a group homomorphism and thus $\pi_\op\circ \zeta$ is a morphism of algebraic groups.

    Since $\Cc$ preserves homotopies (Lemma \ref{lemma:preserve-htpy} \textit{(c)}), it follows that $\pi_\op\circ\zeta(\ok)$  restricts to the trivial morphism on $\mathscr K(\a)(\ok)\subseteq \Aut(\a)$. Hence $\pi_{\op}\circ\zeta$ factors through $\Aaut^h(\a)= \Aaut(\a)/\mathscr K(\a)$ and thus there is an induced morphism of algebraic groups
    $$
    F\colon \Aaut^h(\a) \to \Aaut^h(A_n)^{\op}.
    $$
    By Lemma \ref{lemma:ZariskiThm}, it is enough to show that $F$ induces an isomorphism on the group of $\ok$-points.
    Again, we have that $F(\ok)\colon\Aut^h(\a)\to\Aut^h(A_n)$ factors through a map $\chi\colon\Aut^h(\a)\to\Aut^h(A)$, $\chi([\varphi]) = [\nu\circ\Cc(\varphi)\circ \eta]$ along the restriction induced isomorphism $\Aut^h(A)\to \Aut^h(A_n)$. Hence it is enough to prove that $\chi$ is an isomorphism.

    \underline{\textit{$\chi$ is surjective:}} Since $\a$ and $\mathcal L(A)$ are quasi-isomorphic and cofibrant, there is some quasi-isomorphism $\kappa\colon\a\to\mathcal L(A)$ and a homotopy inverse $\tau\colon \mathcal L(A)\to \a$.
    
    Let $j\colon A\to A$ be the composition
    $$
    j\colon A\xrightarrow\eta \Cc(\a)\xrightarrow{\Cc(\tau)}\Cc(\mathcal L(A))\xrightarrow{\tilde u} A.
    $$
    Since $j$ is a composition of quasi-isomorphisms, and since $A$ is minimal, it follows that $j$ is an isomorphism.

    Let $\tilde u^{-1}\colon A\to \Cc(\mathcal L(A))$ be any homotopy inverse to the quasi-isomorphism $\tilde u\colon\Cc(\mathcal L(A))\to A$ and let $\ell\colon A\to A$ be the composition 
    
    $$
    \ell\colon A\xrightarrow{\tilde u^{-1}}\Cc(\mathcal L(A))\xrightarrow{\Cc(\kappa)}\Cc(\a)\xrightarrow{\nu}A
    $$
    which is a homotopy inverse to $j$ (but not necessarily the  strict inverse).
    
    For a homotopy class $[\Phi]\in \Aut^h(A)$, we pick some representative $\Phi\in \Aut(A)$, consider the automorphism $\varphi$ of $\a$ given by
    $$
    \varphi\colon\a\xrightarrow{\kappa}\mathcal L(A)\xrightarrow{\mathcal L(j^{-1}\Phi \ell^{-1})} \mathcal L(A)\xrightarrow{\tau} \a.
    $$
    We have that  $\chi([\varphi])= [\nu\circ \Cc(\varphi)\circ \eta]$.  We will show that $\nu\circ \Cc(\varphi)\circ \eta \simeq \Phi$, and thus  $\chi([\varphi])=[\Phi]$. 
    Consider the diagram
    $$
    \xymatrix@C=3.5em{
    A\ar[r]^-\eta\ar@{=}[d] & \Cc(\a)\ar[r]^-{\Cc(\tau)} & \Cc(\mathcal L(A))\ar[rr]^-{\Cc(\mathcal L(j^{-1}\Phi \ell^{-1}))}\ar[d]_{\tilde u} && \Cc(\mathcal L(A))\ar[r]^-{\Cc(\kappa)}\ar[d]_{\tilde u}& \Cc(\a)\ar[r]^-\nu & A \ar@{=}[d]
    \\
    A\ar[rr]_-j& & A\ar[rr]_-{j^{-1}\Phi \ell^{-1}} && A \ar[rr]_-\ell& & A. 
    }
    $$
    We have that the top row composes to $\nu\circ \Cc(\varphi)\circ \eta$ while the bottom row composes to $\Phi$. The diagram is homotopy commutative; the left square commutes strictly (by definition of $j$), the middle square commutes because of the naturality of $\tilde u$ and the right square commutes up to homotopy by definition of $\ell$. From this we conclude that $\nu\circ \Cc(\varphi)\circ \eta \simeq \Phi$, and thus  $\chi([\varphi])=[\Phi]$.

    \underline{\textit{$\chi$ is injective:}}
    Similar to above, we construct automorphisms $J,L\colon \a\to \a$, being homotopy-inverses to each other and making for every automorphism $\varphi\in\Aut(\a)$ the following diagram 
    $$
    \xymatrix@C=3.5em{
    \a\ar[r]^-\kappa\ar@{=}[d] & \mathcal L(A)\ar[r]^-{\mathcal L(\nu)} & \mathcal L(\Cc(\a))\ar[rr]^-{\mathcal L(\Cc(J^{-1}\varphi L^{-1}))}\ar[d]_{\tilde c} && \mathcal L(\Cc(\a))\ar[r]^-{\mathcal L(\eta)}\ar[d]_{\tilde c}& \mathcal L(A)\ar[r]^-\tau & \a \ar@{=}[d]
    \\
    \a\ar[rr]_-J& & \a\ar[rr]_-{J^{-1}\varphi L^{-1}} && \a \ar[rr]_-L& & \a 
    }    
    $$
    homotopy commutative.
    
    Assume now that $\chi([\varphi]) =[\nu\circ\Cc(\varphi)\circ\eta]= [\id]$. In particular, for any representative $\varphi$ of $[\varphi]$ we have that $\nu\circ\Cc(\varphi)\circ \eta\simeq \id$. Since $\nu$ and $\eta$ are homotopy inverses to each other, it also follows that $\Cc(\varphi)\simeq\id$. For such a $\varphi$, we have that the top row composes to an automorphism homotopic to the identity since $\mathcal L(\Cc(J^{-1}\varphi L^{-1}))=\mathcal L(\Cc(J^{-1}))\mathcal L(\Cc(\varphi))\mathcal L(\Cc(L^{-1})) \simeq \mathcal L(\Cc(J^{-1}))\mathcal L(\Cc(L^{-1}))\simeq \id$ (recall that $\mathcal C^*$ and $\mathcal L$ preserves  homotopic maps by Lemma \ref{lemma:preserve-htpy} under certain conditions that are fulfilled in this case), $\mathcal L(\nu)$ and $\mathcal L(\eta)$ are homotopy-inverse to each other, and $\kappa$ and $\tau$ are homotopy-inverse to each other. Hence, $\varphi$, which is the composition of the bottom row, is also homotopic to the identity. Thus $[\varphi]=[\id]$.
\end{proof}

\begin{exmp}
    We have that Theorem \ref{main-thm-rht} (in the introduction) is a special case of Theorem \ref{main-thm}. If $\L W$ is a 1-connected minimal chain Lie algebra model for a simply connected space $X$, then a minimal model for $\Cc(\L W)$ is the reduced minimal Sullivan model for $X$. We explain what we mean by reduced; if $\Lambda V$ is a minimal model for $X$, then $\overline{\Lambda V} = \Lambda V/k1$ is called the reduced minimal Sullivan model for $X$. However, since an algebra map between unital algebras has to send 1 to 1, it follows that $\map(\Lambda V,\Lambda V')=\map(\overline{\Lambda V},\overline{\Lambda V'})$ and $\Aut^h(\Lambda V)\cong \Aut^h(\overline{\Lambda V})$.
\end{exmp}

\begin{exmp} Let $X$ be finite simply connected $n$-dimensional CW-complex and let\break $(T_{\mathrm{cochain}}(V),d)$ be a minimal cochain associative algebra model for the singular cochain algebra $C^*(X;k)$ and $(T_{\mathrm{chain}}(W),\delta)$ be a minimal chain associative algebra model for the singular chains on the based  loops $C_*(\Omega X;k)$ with the Pontryagin product. The associative operad is Koszul dual to itself and the associated bar construction takes $C_*(\Omega X;k)$ to a coalgebra quasi-isomorphic to $C_*(X;k)$. By Theorem \ref{main-thm}, it follows that
$$
\Aut^h(T_{\mathrm{cochain}}(V),d)\cong \Aut^h(T_{\mathrm{cochain}}(V^{\leq n}),d) \cong \Aut^h(T_{\mathrm{chain}}(W),\delta)
$$
where the last isomorphism is induced by an isomorphism of algebraic groups.
\end{exmp}

\bibliographystyle{amsalpha}
\bibliography{references}
\noindent
\Addresses
\end{document}